\newcommand{\<}{\langle}
\renewcommand{\>}{\rangle}
\theoremstyle{definition}
\newtheorem{theorem}{Theorem}
\newtheorem{lemma}{Lemma}
\newtheorem{prop}{Proposition}
\newtheorem{remark}{Remark}
\newcommand{\bR}{\mathbb{R}}
\DeclareMathOperator*{\argmin}{argmin}
\newcommand*{\rom}[1]{\expandafter\@slowromancap\romannumeral #1@}
\title{\LARGE \bf
Aggressive Local Search for Constrained Optimal Control Problems with Many Local Minima}
\author{Yuhao Ding, Han Feng and Javad Lavaei
\thanks{This work was supported by grants from AFOSR, NSF EPCN, ONR and DARPA.}
\thanks{Y. Ding, H. Feng, J. Lavaei are with the Department of Industrial Engineering and Operations Research, University of California, Berkeley. Email:
        {\tt\small \{yuhao\_ding, han\_feng, lavaei\}@berkeley.edu}}%
}
\begin{document}
\maketitle
\thispagestyle{empty}
\pagestyle{empty}

\begin{abstract}
This paper is concerned with numerically finding a global solution of constrained optimal control problems with many local minima. The focus is on the optimal decentralized control (ODC) problem, whose feasible set is recently shown to have an exponential number of connected components and consequently an exponential number of local minima. The rich literature of numerical algorithms for nonlinear optimization suggests that if a local search algorithm is initialized in an arbitrary connected component of the feasible set, it would search only within that component and find a stationary point there. This is based on the fact that numerical algorithms are designed to generate a sequence of points (via searching for descent directions and adjusting the step size), whose corresponding continuous path is trapped in a single connected component. In contrast with this perception rooted in convex optimization, we numerically illustrate that local search methods for non-convex constrained optimization can obliviously jump between different connected components to converge to a global minimum, via an aggressive step size adjustment using backtracking and the Armijio rule. To support the observations, we prove that from almost every arbitrary point in any connected component of the feasible set, it is possible to generate a sequence of points using local search to jump to different components and converge to a global solution. However, due to the NP-hardness of the problem, such fine-tuning of the parameters of a local search algorithm may need prior knowledge or be time consuming. This paper offers the first result on escaping non-global local solutions of constrained optimal control problems with complicated feasible sets.
\end{abstract}
\section{INTRODUCTION}

The linear-quadratic regulator (LQR) optimal control problem has been extensively studied in the past century~\cite{willemsLeastSquaresStationary1971, Anderson_1971}. A renewed interest in this classical topic is partially driven by tools in machine learning, where the successful applications of general optimization methods call for new theoretical analyses~\cite{fazelGlobalConvergencePolicy2018,ouyangControlUnknownLinear2017}. The behavior of more complex methods like policy gradient in reinforcement learning~\cite{rechtTourReinforcementLearning2018a} can also be understood in their application to linear-quadratic problems. They serve as a suitable baseline, because they admit well-known linear optimal solutions given by the Riccati equations~\cite{dullerud2013course} and an elegant parametrization of all sub-optimal solutions~\cite{Doyle1989}. Both properties, however, break down when we impose structures such as locality and delay on the controller~\cite{Witsenhausen1968}. 

The problem of finding an optimal controller subject to structural constraints is known as the optimal decentralized control (ODC) problem. ODC has been proved to be NP-hard~\cite{blondelNPHardnessLinearControl1997}, and an extensive research effort has been devoted to identifying structures or approximations that bypass the worse-case exponential complexity. It is known that the existence of stabilizing dynamic structured feedback is captured by the notion of decentralized fixed modes~\cite{Shih-HoWang1973}. When the system is spatially invariant~\cite{Bamieh2002}, hierarchical~\cite{salapakaStructuredOptimalRobust2004}, positive~\cite{Rantzer2015}, or quadratic invariant~\cite{rotkowitzConvexityOptimalControl2010}, ODC has a convex formulation. A System Level Approach~\cite{wangSystemLevelApproach2016} also convexifies ODC at the expense of working with a series of impulse response matrices. Various approximation ~\cite{Fattahi,Arastoo2015,matni2013heuristic} and convex relaxation techniques~\cite{Sojoudi2013, Fazelnia2014, Boyd1994} also exist in the literature. 

On the algorithmic side, nonlinear programming methods have been applied to instances of ODC to promote sparsity in controllers~\cite{Lin_TAC_2013}, or to approximate the optimal solution with prior constraints~\cite{Levine_1970, Wenk1980,Lin_TAC_2011}. Early works have been summarized in the survey~\cite{makilaComputationalMethodsParametric1987}, where various convergence rates have been discussed in the centralized controller case. In the decentralized case, the control literature lacks strategies to escape saddle points, or a guarantee of no spurious local optimum, or even an efficient initialization strategy that promotes convergence to a globally optimal solution. Those considerations in contrast have been extensively analyzed for many unconstrained problems in statistical learning~\cite{ge2016,Josz2018,lav_NIPS_2018_1}. We also mention that an interesting continuation method with risk-averse objective has been touched upon in~\cite{dvijothamConvexRiskAverse2014}. 

An often-overlooked aspect in ODC is that its feasible set can be disconnected. In fact, even for the simplest chain structure, the number of connected components may grow exponentially in the order of the system~\cite{Han_2019}. This means that since methods based on feasible-direction local search almost always assume connectivity in the underlying feasible set,  they may not be effective for finding a globally optimal solution to a general optimal decentralized control problem (because there are an exponential number of connected components, which implies at least the same number of local solutions and initializations in order to start in the correct connected component). 
More precisely, each connected component has a local solution and, therefore, feasible-direction local search methods should know which connected component has the global solution in order to start the iterations within that component. Note that such numerical algorithm generates a path from the initial point to the final stationary point being found by the algorithm. For convergence analysis, this path is often considered to be the discrete samples of a continuous path that is trapped within a single connected component where the algorithm is initialized.

In this work, we  show that numerical optimization algorithms are oblivious to the geometry of the feasible set and the discrete path of iterative points could potentially jump between connected components without realizing the existence of  discontinuity. We  also study a potentially infeasible-direction local search method, named  augmented Lagrangian~\cite{Lin_TAC_2011}, for which  the structural constraints can be violated at the beginning of the iterations but will be satisfied asymptotically as the number of iterations increases. This paper shows empirically that for constrained optimal control problems with many local minima, jumping between components is likely with random initializations and aggressive step-size rules. This allows jumps from the sub-optimal  component to the  globally-optimal component and vice versa. Moreover, we prove that a succession of jumps to the globally optimal component with descent directions is possible for almost all initializations. The phenomenon of jumping between connected components is appealing, though finding the correct step size could be challenging in general, due to the NP hardness of the problem. In summary, this work shows that unlike convex optimization where a small step size is used, an aggressive step size is the only viable method for escaping non-global local minima created by the discontinuity of the feasible set. 

A road-map for the remainder of the paper is as follows.
Notations and problem formulations are given in Section~\ref{sec:formulation}. 
Section~\ref{sec:local_search} gives an overview of two common local search algorithms, whose empirical performances are compared in Section~\ref{sec:simulation}. Section~\ref{sec:connectpath} proves that almost all initializations can be connected to the globally optimal component via descent directions. Concluding remarks are drawn in Section~\ref{sec:conclusion}.

\vspace{-2mm}

\section{PROBLEM FORMULATION AND PRELIMINARIES}\label{sec:formulation}
Consider the linear time-invariant (LTI) system
\begin{equation} \label{LTI}
\begin{aligned} 
    \Dot{x}(t)&=Ax(t)+Bu(t),\\
    y(t)&=Cx(t), 
\end{aligned}
\end{equation}
with an unknown initial state $x(0)=x_0$, where $x_0$ is treated as a random variable with a zero mean and the positive-definite covariance matrix $D_0$.
Consider also 
 the quadratic performance measure
\begin{align} \label{performance1}
\nonumber J(K)=\mathbb{E}_{x_0}\Big\{\int_0^{\infty}\Big[&x^\top(t)R_1x(t)+2x^\top(t)R_{12}u(t)\\
       &+u^\top(t)R_2u(t)\Big]dt\Big\}, 
\end{align}
where the matrix $\begin{bmatrix}{}
    R_1 & R_{12} \\
    R_{12}^\top & R_2 
\end{bmatrix}$ is positive smei-definite and $R_2$ is positive definite (the symbol $\mathbb{E}\{\cdot\}$ denotes the expectation operator).
We focus on the static case where the control input $u(t)$ is to be determined by a static output-feedback law $u(t)=-Ky(t)$. The objective is to design a  decentralized controller $K$ that belongs to a linear subspace $\mathcal{S} \subseteq \mathbb{R}^{m \times p}$, which models a user-defined decentralized control structure (note that $m$ and $p$ denote the dimensions of the input and output vectors, respectively). Let $\mathcal M$ denote the set of matrices $K$ for which all eigenvalues of $A-BKC$ are in the open left-half plane. The constrained optimal control problem of minimizing $J(K)$ over the feasible set $\mathcal M\cap \mathcal S$ is named optimal decentralized control (ODC) and can be formulated as
\begin{equation*}
\begin{aligned} \label{formulation1}
& \underset{K\in\mathcal M}{\text{minimize}}
& & \text{trace} (P(K)D_0 ) \\
& \text{subject to} & &  K \in \mathcal{S}
\end{aligned}
\tag{$P_1$}
\end{equation*}
where the matrix $P(K)$ denotes the closed-loop observability Gramian 
\begin{equation}
\begin{aligned}
    P(K)=\int_0^{\infty}\Big[&e^{(A-BKC)^\top t}[R_1-R_{12}KC-C^{\top}K^{\top}R_{12}^{\top}\\
            &+C^\top K^\top R_2KC]e^{(A-BKC) t}\Big]dt, 
\end{aligned}
\end{equation}
which can be equivalently obtained by solving the Lyapunov equation
\begin{equation}
\begin{aligned}
    &(A-BKC)^\top P+P(A-BKC)\\
    &=-(R_1-R_{12}KC -C^{\top}K^{\top}R_{12}^{\top}+C^\top K^\top R_2KC). 
\end{aligned}
\end{equation}
In  optimization~\eqref{formulation1}, since the open set $\mathcal M$ is a connected but a highly sophisticated set that cannot be efficiently characterized by algebraic equations, we regard it as the domain of the definition of the objective function $J(K)$. In contrast, even though the constraint $K\in\mathcal S$ causes the ODC to become NP-hard, it is a simple convex set and therefore we keep it as an explicit constraint in the problem. To handle the constraint $K\in\mathcal S$, one can impose it as a hard constraint or a soft constraint through a penalty function. To explain the latter approach, let $h:\mathbb R^{m \times p}\rightarrow \mathbb R$ be an arbitrary penalty function with the following properties: (i) $h(K)$ is continuous, (ii) $h(K)\geq 0$ for all $K\in\mathbb R^{m \times p}$, (iii) $h(K)=0$ if and only if $K\in\mathcal S$. Given a large positive constant $c$, the unconstrained counterpart of optimization~\eqref{formulation1}  is
\begin{equation*}
\begin{aligned} \label{formulation1_1}
& \underset{K\in\mathcal M}{\text{minimize}}
& & \text{trace} (P(K)D_0 ) +c h(K)
\end{aligned}
\tag{$P_1'$}
\end{equation*}
It is known that, under mild conditions,  \eqref{formulation1_1} can be used to find local minima of \eqref{formulation1} precisely for certain types of non-differentiable penalty functions (e.g., 1-norm penalty) and approximately with arbitrarily small errors for almost all differentiable penalty functions (e.g., quadratic penalty). To solve ODC numerically, we make the assumption that an initial feasible controller $K^0$ is available. This means the availability of a decentralized stabilizing controller $K^0\in\mathcal M\cap \mathcal S$ for   \eqref{formulation1} and a centralized stabilizing controller $K^0\in\mathcal M$ for   \eqref{formulation1_1}. Any descent algorithm generates a sequence of controllers $K^0,K^1,K^2,\ldots$.  The main difference between  \eqref{formulation1}  and \eqref{formulation1_1} is whether the constraint $K\in\mathcal S$ should be satisfied for all points of the sequence or only at its limit. The limit point of the sequence, if exists, could be a saddle point or a local minimum of the corresponding optimization problem. The work \cite{leeGradientDescentConverges2016} states that, under some conditions, the gradient descent algorithm with a random initialization and sufficiently small constant step sizes does not become stuck in a saddle point almost surely. However, since it is important to find a global solution of ODC, a question arises as to how many local minima \eqref{formulation1} or  \eqref{formulation1_1} has. The following result is a by-product of our recent work~\cite{Han_2019}.

\begin{lemma}
Suppose that $C$ has full row rank and  $\left[\begin{smallmatrix}
	R_1 & R_{12} \\ R_{12}^\top & R_2
\end{smallmatrix}\right]$ is positive definite. There are instances of the ODC problem for which the constrained optimization problem \eqref{formulation1} and its penalized counterpart \eqref{formulation1_1} both have an exponential number of local minima  (with respect to $n$) if $c$ is sufficiently large. 
\end{lemma}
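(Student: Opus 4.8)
The plan is to build directly on the main result of~\cite{Han_2019}, which exhibits a family of ODC instances---specified by matrices $A,B,C$ with $C$ of full row rank together with a decentralization subspace $\mathcal S$---for which the feasible set $\mathcal M\cap\mathcal S$ has a number $N$ of connected components that grows like $2^{\Omega(n)}$. Write these components as $F_1,\dots,F_N$. Since $\mathcal S$ is a finite-dimensional normed space, hence locally connected, and $\mathcal M$ is open, each $F_i$ is open in $\mathcal S$, is clopen in $\mathcal M\cap\mathcal S$, and the $F_i$ are pairwise disjoint. To obtain an instance of~\eqref{formulation1}, complete the data with any cost weights satisfying $\left[\begin{smallmatrix}R_1 & R_{12}\\ R_{12}^\top & R_2\end{smallmatrix}\right]\succ 0$ and any $D_0\succ 0$; this choice affects neither $\mathcal M$, nor $\mathcal S$, nor the partition $\{F_i\}$.

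First I would handle the hard-constrained problem~\eqref{formulation1}. The crucial structural fact is that, under these hypotheses, $J(K)=\operatorname{trace}(P(K)D_0)$ is \emph{proper} on $\mathcal M$: its sublevel sets relative to $\mathcal M$ are compact. Coercivity as $K$ approaches $\partial\mathcal M$ is classical, since the closed-loop Gramian diverges as an eigenvalue of $A-BKC$ reaches the imaginary axis. Coercivity as $\|K\|\to\infty$ is where the two hypotheses enter: writing the integrand weight as $Q(K)=R_1-R_{12}KC-C^\top K^\top R_{12}^\top+C^\top K^\top R_2KC$, a completion of squares gives the uniform bound $Q(K)\succeq R_1-R_{12}R_2^{-1}R_{12}^\top\succ 0$, while full row rank of $C$ gives $\|KC\|_F\ge\sigma_{\min}(C)\,\|K\|_F$, so the quadratic control-effort term of $J$ diverges; this is the standard coercivity argument for LQR-type costs, cf.~\cite{fazelGlobalConvergencePolicy2018}. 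Consequently, since $F_i$ is clopen in $\mathcal M\cap\mathcal S$ its relative boundary in $\mathcal S$ lies in $\partial\mathcal M$, and properness forces $J$ to attain its infimum $m_i:=\inf_{F_i}J$ at an interior point $K_i^\star\in F_i$. Because $F_i$ is open in $\mathcal S$ and contained in $\mathcal M$, the point $K_i^\star$ is a genuine local minimum of~\eqref{formulation1}, and $K_1^\star,\dots,K_N^\star$ are distinct as the $F_i$ are disjoint; this already yields the asserted $2^{\Omega(n)}$ local minima of~\eqref{formulation1}.

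The substantive part---and the main obstacle---is the penalized problem~\eqref{formulation1_1}, whose domain $\mathcal M$ is connected, so the penalty $c\,h(K)$ must be shown to re-create a local minimum near each $F_i$ even though $h$ vanishes on all of $\mathcal S\supseteq F_i$ and therefore does not directly ``wall off'' any component. I would argue as follows. Let $\mathrm{Argmin}_i$ denote the set of minimizers of $J$ on $F_i$; by properness it is a nonempty compact subset of the $\mathcal S$-open set $F_i$, so for $\rho_i>0$ small enough the closed $\rho_i$-neighbourhood $\overline{B_i}$ of $\mathrm{Argmin}_i$ in $\mathbb R^{m\times p}$ satisfies $\overline{B_i}\subset\mathcal M$, $\overline{B_i}\cap\mathcal S\subseteq F_i$, and the $\overline{B_i}$ are pairwise disjoint. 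On the compact set $\partial B_i\cap\mathcal S$---which lies in $F_i$ and is disjoint from $\mathrm{Argmin}_i$---one has $J\ge m_i+\mu_i$ for some $\mu_i>0$, and by continuity of $J$ on the compact set $\overline{B_i}\subset\mathcal M$ this survives, as $J\ge m_i+\mu_i/2$, on the part of $\partial B_i$ within some distance $\eta_i>0$ of $\mathcal S$. On the remaining part of $\partial B_i$---a compact set bounded away from $\mathcal S$, hence one on which $h\ge\delta_i$ for some $\delta_i>0$---one has $g_c(K):=J(K)+c\,h(K)\ge\min_{\overline{B_i}}J+c\,\delta_i$. Therefore, once $c$ exceeds the finite threshold $(m_i-\min_{\overline{B_i}}J)/\delta_i$, every $K\in\partial B_i$ satisfies $g_c(K)>m_i=g_c(K_i^\star)\ge\min_{\overline{B_i}}g_c$, so $g_c$ attains its minimum over $\overline{B_i}$ at an interior point of $B_i$, which is a local minimum of~\eqref{formulation1_1}. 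Choosing $c$ above the maximum of these $N$ thresholds---a finite number, since $N$ is fixed once the instance is---produces $N$ distinct local minima of~\eqref{formulation1_1}, one in each $B_i$.

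I expect the delicate steps to be exactly (i) reconciling the two boundary estimates in the penalized case, where the portion of $\partial B_i$ near $\mathcal S$ is controlled only by the strict excess of $J$ over $m_i$ while the portion away from $\mathcal S$ is controlled by the penalty, so that $B_i$ and $c$ must be tuned for both to hold at once; and (ii) the two facts underlying the constrained case---properness of $J$ on $\mathcal M$ and openness of the $F_i$ in $\mathcal S$---which together ensure that each componentwise minimizer is an honest local minimum rather than a mere infimum.
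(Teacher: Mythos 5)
Your proposal is correct, and for the hard-constrained problem $(P_1)$ it follows essentially the paper's route: take an instance from the cited family with exponentially many connected components of $\mathcal M\cap\mathcal S$, and use coercivity of $J$ both as $K$ approaches $\partial\mathcal M$ and as $\|K\|_2\to\infty$ (the paper isolates exactly this as Lemma~\ref{lem:lqrbounded}, proving the $\|K\|_2\to\infty$ case through the Lyapunov equation and a triangle inequality rather than through a lower bound on the closed-loop state covariance as in your sketch; both work) to conclude that $J$ attains its infimum in the interior of each component, and that each such minimizer is a genuine local minimum because the components are open in $\mathcal S$. Where you genuinely diverge is the penalized problem $(P_1')$. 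The paper handles it by appealing to the classical penalty-method convergence technique of Luenberger and Ye: initialize gradient descent in an $\epsilon$-neighborhood $\mathcal O(\epsilon)$ of a component's minimizer set and argue that the iterates converge to a local minimum interior to $\mathcal O(\epsilon)$ which approaches $\mathcal O$ as $c\to\infty$. You instead give a direct, algorithm-free localization argument: on the boundary of a small closed neighborhood $\overline{B_i}$ of $\operatorname{Argmin}_i$, the portion near $\mathcal S$ is controlled by the strict excess of $J$ over $m_i$ on a compact subset of $F_i\setminus\operatorname{Argmin}_i$, the portion bounded away from $\mathcal S$ is controlled by $c\,h$ once $c$ exceeds an explicit finite threshold, and compactness then forces $J+c\,h$ to attain its minimum over $\overline{B_i}$ at an interior point, yielding one local minimum of $(P_1')$ per component. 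This buys a self-contained proof with a concrete threshold for $c$ and no reliance on the behavior of a particular iterative scheme, at the cost of the boundary bookkeeping you flag in (i) (which, as you note, requires choosing $\rho_i$, $\eta_i$ and $c$ compatibly, but goes through). Both arguments reach the same conclusion; yours is the more explicit of the two as written.
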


\begin{proof} Consider any instance of the class of ODC problems given in \cite{Han_2019} with the property that the feasible set of the problem has an exponential number of connected components.  Due to the coercive property proven in Lemma~\ref{lem:lqrbounded} (stated later in the paper), each connected component must have a local minimum. Therefore,  \eqref{formulation1} has an exponential number of local minima. Let $\mathcal O$ denote the set of all local minima in any arbitrary connected component of the feasible set of ODC, and $\mathcal O(\epsilon)\subseteq\mathbb R^{m\times p}$ be the set of all points in the feasible set of \eqref{formulation1_1}  that are at most $\epsilon$ away from $\mathcal O$, for any given $\epsilon>0$. If  \eqref{formulation1_1} is numerically solved using gradient descent for an initial point in $\mathcal O(\epsilon)$, it follows from the proof technique given in \cite{Luenberger_book} that the algorithm will converge to a local minimum that is in the interior of $\mathcal O(\epsilon)$ and approaches $\mathcal O$  as $c$ goes to infinity. This implies that \eqref{formulation1_1}  has at least one local minimum corresponding to the set $\mathcal O$. Therefore, \eqref{formulation1_1} has an exponential number of local minima. 
\end{proof}

It should be noted that the work \cite{fazelGlobalConvergencePolicy2018} shows that if $c=0$, then \eqref{formulation1_1} has a single local solution (which should be global as well). However, the above result indicates the complexity added by softly penalizing the sparsity pattern of the controller. 

\subsection{Summary of Contribution}

Given the existence of many local minima for ODC in general, it is important to understand how effective local search algorithms are. These algorithms often have two parameters to design at every iteration: (i) descent direction, (ii) step size. Rooted in convex optimization, there is a large literature on how to design these two parameters to guarantee convergence to a solution. In particular, the existing solvers often use the backtracking technique to design a step size, which starts with a guess for the step size and then reduces it by a constant factor iteratively until an appropriate value is found. The initial guess for backtracking is often considered small since a large number does not offer any major benefits for convex problems. In this work, we show the contrary and prove that a large initial step for backtracking, which we name {\it aggressive local search}, has the ability to skip local minima by jumping from one connected component of the feasible set to another one. We first numerically illustrate this idea and then theoretically show that there exist values for the parameters (i) and (ii) of local search to guarantee convergence to a global solution from almost every feasible point. This positive result implies that local search does not necessarily become stuck even with a bad initialization, but finding the right parameters (i) and (ii) would be difficult in the worst case due to the NP-hardness of the problem. 

\vspace{-2mm}

\subsection{Algebraic Characterization of Structural Constraints}
Similar to ~\cite{Lin_TAC_2013} and ~\cite{Lin_TAC_2011}, we introduce a \textit{structural identity} matrix $I_{\mathcal{S}}$ of the linear subspace $\mathcal{S}$ to algebraically characterize the structural constraint $K\in \mathcal{S}$. The $(i,j)$-entry of $I_{\mathcal{S}}$ is defined as
\begin{align*}
[I_{\mathcal{S}}]_{ij}= \begin{cases} 1, \qquad \text{if $K_{ij}$ is a free variable,}\\
0, \qquad \text{if $K_{ij}=0$ is required.}
\end{cases}
\end{align*}   
Let $I_\mathcal{S}^c:=\mathbf{1}-I_\mathcal{S}$ be the structural identity of the complementary subspace $\mathcal{S}^c$, where $\mathbf{1}$ is the matrix with all its entries equal to one. One can write
\begin{align*}
K \in \mathcal{S} \Longleftrightarrow K \odot I_\mathcal{S}=K \Longleftrightarrow K \odot I_\mathcal{S}^c=0
\end{align*}
where $\odot$ denotes the entry-wise multiplication of matrices. The Formulation \eqref{formulation1} can be written as 
\begin{equation*}
\begin{aligned} \label{formulation2}
& \underset{K\in\mathcal M}{\text{minimize}}
& & \text{trace} (D_0 P(K)) \\
& \text{subject to}
& &  K \odot I_\mathcal{S}^c=0
\end{aligned}
\tag{$P_2$}
\end{equation*}

\vspace{-2mm}

\subsection{Inverse Optimal Control}
It is known in the context of inverse optimal control~\cite{inverse_TAC_1972} that any static state-feedback gain $K^{\text{opt}}$ is the unique minimizer of some quadratic performance measure \eqref{performance1} for all initial states. One such measure is
\begin{align} 
    \int_0^{\infty}\Big[(u(t)+K^{\text{opt}}Cx(t))^\top R_{2}(u(t)+K^{\text{opt}} Cx(t))\Big]dt.
\end{align}
Accordingly, we write $R_1$ and $R_{12}$ as 
\begin{equation} \label{inverse_weights}
R_1=C^\top {K^{\text{opt}}}^{\top}R_{2} K^{\text{opt}} C, \, \, R_{12}=C^\top {K^{\text{opt}}}^{\top} R_{2}. 
\end{equation}
If $K^{\text{opt}} \in \mathcal{S}$, the construction in \eqref{inverse_weights} ensures that $K^{\text{opt}}$ is the globally optimal controller in both the decentralized and the centralized settings. We will use this fact to conduct case studies later in this paper.



\vspace{-2mm}

\section{LOCAL SEARCH ALGORITHMS} \label{sec:local_search}
In this section, we give an overview of two optimization frameworks that have been applied to instances of ODC to deal with  structural constraints: the projection-based method~\cite{Lin_TAC_2013} and the augmented Lagrangian method~\cite{Lin_TAC_2011}.

We first derive the objective function's first and second derivatives. This will lead to the necessary optimality condition that can be exploited to develop local search algorithms to solve the ODC problem.
Applying the standard techniques~\cite{Levine_1970},~\cite{Rautert_1997} to the LTI system \eqref{LTI} with the quadratic performance measure \eqref{performance1}, we obtain the first- and second-order derivatives of $J$ as follows. 
\begin{prop}[]
The gradient of $J$ is given by
\begin{equation} \label{G_J}
   \nabla J(K)=2(R_2KC-R_{12}^\top-B^\top P)LC^\top, 
\end{equation}
where $L$ and $P$ are the controllability and observability Gramians of the closed-loop system, given by
\begin{equation} \label{FON-L}
\begin{aligned}
 \hspace*{-0.5cm}& (A-BKC)L+L(A-BKC)^\top=-D_0, 
  \end{aligned}\tag{$FON-L$}
\end{equation}
and
\begin{equation}\label{FON-P}
\begin{aligned}
    &(A-BKC)^\top P+P(A-BKC)\\
    &=-(R_1-R_{12}KC -C^{\top}K^{\top}R_{12}^{\top}+C^\top K^\top R_2KC). 
  \end{aligned}\tag{$FON-P$}
\end{equation}
\end{prop}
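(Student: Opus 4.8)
The statement to prove is the Proposition giving the gradient $\nabla J(K) = 2(R_2KC - R_{12}^\top - B^\top P)LC^\top$ together with the two Lyapunov (first-order necessary condition) equations for $L$ and $P$. My plan is to compute the directional derivative of $J$ at a feasible $K\in\mathcal M$ in an arbitrary direction $\Delta\in\mathbb R^{m\times p}$ and then read off the gradient from the identity $DJ(K)[\Delta] = \langle \nabla J(K), \Delta\rangle = \operatorname{trace}(\nabla J(K)^\top \Delta)$.

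\textbf{Step 1: Set up the closed-loop data.} Write $A_{cl}(K) = A - BKC$ and let the integrand weight in $P(K)$ be $Q(K) = R_1 - R_{12}KC - C^\top K^\top R_{12}^\top + C^\top K^\top R_2 KC$, so that $P=P(K)$ solves $A_{cl}^\top P + P A_{cl} = -Q$ and $J(K) = \operatorname{trace}(P(K)D_0)$. Since $K\in\mathcal M$, $A_{cl}$ is Hurwitz, so $P$ (and likewise $L$ solving $A_{cl}L + LA_{cl}^\top = -D_0$) is the unique solution of its Lyapunov equation and depends smoothly on $K$; this justifies differentiating. A small remark that $L,P\succeq 0$ under the stated positivity assumptions closes the ``Gramian'' interpretation but is not needed for the formula.

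\textbf{Step 2: Differentiate.} Perturb $K\to K+\epsilon\Delta$, write $A_{cl}\to A_{cl} - \epsilon B\Delta C$, $Q\to Q + \epsilon\,\delta Q + O(\epsilon^2)$ with $\delta Q = -R_{12}\Delta C - C^\top\Delta^\top R_{12}^\top + C^\top\Delta^\top R_2 KC + C^\top K^\top R_2 \Delta C$, and $P\to P + \epsilon\,\delta P + O(\epsilon^2)$. Collecting the $O(\epsilon)$ terms in the Lyapunov equation gives $A_{cl}^\top\,\delta P + \delta P\,A_{cl} = -\delta Q + C^\top\Delta^\top B^\top P + P B\Delta C$. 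Then $DJ[\Delta] = \operatorname{trace}(\delta P\, D_0)$. The standard trick is to use the $L$-equation as an adjoint: $\operatorname{trace}(\delta P\, D_0) = -\operatorname{trace}(\delta P(A_{cl}L + LA_{cl}^\top)) = -\operatorname{trace}((A_{cl}^\top\delta P + \delta P A_{cl})L)$, and substitute the expression for $A_{cl}^\top\delta P + \delta P A_{cl}$ from the perturbed Lyapunov equation. This yields $DJ[\Delta] = \operatorname{trace}\big((\delta Q - C^\top\Delta^\top B^\top P - PB\Delta C)L\big)$.

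\textbf{Step 3: Collect into a gradient.} Plug in $\delta Q$, expand the trace, and use cyclicity and $\operatorname{trace}(X) = \operatorname{trace}(X^\top)$ to pair up the four $\delta Q$-terms with the two $P$-terms into a single expression of the form $\operatorname{trace}(\nabla J(K)^\top\Delta)$. One finds every term is of the shape $\operatorname{trace}\big(C\,(\cdot)\,LC^\top\Delta^\top\big)$ or its transpose, and combining them gives $\nabla J(K) = 2(R_2KC - R_{12}^\top - B^\top P)LC^\top$; the symmetry of $R_2$ and of the Lyapunov solutions is what makes the factor-of-two consolidation work. The two displayed equations $(FON\text{-}L)$ and $(FON\text{-}P)$ are then just the defining Lyapunov equations for $L$ and $P$, already recorded above.

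\textbf{Main obstacle.} There is no deep difficulty here — the result is classical (the cited \cite{Levine_1970}, \cite{Rautert_1997}) — so the ``hard part'' is purely bookkeeping: keeping the transposes straight in $\delta Q$ and correctly matching each of the six trace terms so the cross-terms cancel and the coefficient collapses to exactly $2$. The one conceptual point worth stating carefully is the justification that $P(K)$ is differentiable in $K$ on $\mathcal M$ (smoothness of the solution map of a Lyapunov equation with Hurwitz coefficient), which legitimizes the formal expansion in Step 2.
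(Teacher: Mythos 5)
Your derivation is correct and is exactly the standard adjoint/Lyapunov-trace computation that the paper itself appeals to (it gives no proof, only citing the classical references for "standard techniques"): perturb the $P$-Lyapunov equation, use the $L$-equation as the adjoint to convert $\operatorname{trace}(\delta P\,D_0)$ into $\operatorname{trace}\bigl((\delta Q - C^\top\Delta^\top B^\top P - PB\Delta C)L\bigr)$, and collect terms. The sign conventions, the perturbed Lyapunov equation, and the pairing of the six trace terms into the factor of $2$ all check out, so no gap remains.
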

\begin{prop}[]
The second-order approximation of $J$ is determined by 
\begin{equation}
  J(K+\tilde{K}) \approx J(K) +\<\nabla J(K), \tilde{K}\>+\frac{1}{2}\<H_J(K,\tilde{K}),\tilde{K}\>, 
\end{equation}
where $H_J(K,\tilde{K})$ is 
\begin{equation} \label{H_J}
2\Big((R\tilde{K}C-B^\top \tilde{P})LC^\top +(R_2KC-R_{12}^\top-B^\top P)\tilde{L}C^\top \Big), 
\end{equation}
and $\tilde{L}$ and $\tilde{P}$ are the solutions of the following Lyapunov equations:
\begin{equation}\label{SON-L}
\begin{aligned}
 & (A-BKC)\tilde{L}+\tilde{L}(A-BKC)^\top=B\tilde{K}CL+(B\tilde{K}CL)^\top, 
  \end{aligned}
\end{equation}
\begin{equation}\label{SON-P}
\begin{aligned}
 &(A-BKC)^\top \tilde{P}+\tilde{P}(A-BKC)=(R_{12}^\top+B^\top P \\
 &-R_2KC)^\top\tilde{K}C+ \Big( (R_{12}^\top+B^\top P-R_2KC)^\top\tilde{K}C \Big)^\top. 
  \end{aligned}
\end{equation}
\end{prop}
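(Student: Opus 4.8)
The plan is to treat $\tilde K$ as a perturbation and expand the closed-loop observability Gramian $P(K+\tilde K)$ to second order directly from its defining equation \eqref{FON-P}, then use the controllability Gramian $L$ of \eqref{FON-L} as an adjoint to turn everything into traces against known matrices. Write $A_K:=A-BKC$ and let $Q(K):=R_1-R_{12}KC-C^\top K^\top R_{12}^\top+C^\top K^\top R_2KC$ denote the right-hand side of \eqref{FON-P}, so that $A_{K+\tilde K}=A_K-B\tilde KC$ and $Q(K+\tilde K)=Q(K)+\delta Q+C^\top\tilde K^\top R_2\tilde KC$ with $\delta Q$ gathering the terms linear in $\tilde K$. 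Since $A_K$ is Hurwitz on $\mathcal M$, each Lyapunov equation below has a unique solution, and $\tilde P,\tilde L,P^{(2)}$ are symmetric because their right-hand sides are. Substituting the perturbed quantities into \eqref{FON-P} and collecting by order in $\tilde K$, the zeroth-order equation is \eqref{FON-P} itself; the first-order equation, after grouping the linear-in-$\tilde K$ pieces into $(R_{12}^\top+B^\top P-R_2KC)^\top\tilde KC$ plus its transpose, is exactly \eqref{SON-P}; and the second-order equation is
\begin{equation*}
A_K^\top P^{(2)}+P^{(2)}A_K=C^\top\tilde K^\top B^\top\tilde P+\tilde PB\tilde KC-C^\top\tilde K^\top R_2\tilde KC .
\end{equation*}
The same substitution into \eqref{FON-L} yields \eqref{SON-L} for the first-order perturbation $\tilde L$ of $L$. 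Hence $J(K+\tilde K)=\trace\!\big(P(K+\tilde K)D_0\big)=J(K)+\trace(\tilde PD_0)+\trace(P^{(2)}D_0)+O(\|\tilde K\|^3)$, and it remains to identify the middle two terms.

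For the linear term, substitute $-D_0=A_KL+LA_K^\top$ and transfer $A_K$ across the trace: $\trace(\tilde PD_0)=-\trace\!\big((A_K^\top\tilde P+\tilde PA_K)L\big)$, and plugging in the right-hand side of \eqref{SON-P} and using cyclicity of the trace together with $L=L^\top$ reproduces $\langle\nabla J(K),\tilde K\rangle$ with $\nabla J$ as in \eqref{G_J}; this doubles as a consistency check with Proposition~1. For the quadratic term, the identical manipulation applied to the second-order equation gives
\begin{equation*}
\trace(P^{(2)}D_0)=\trace\!\big(C^\top\tilde K^\top R_2\tilde KCL\big)-2\,\trace\!\big(\tilde PB\tilde KCL\big),
\end{equation*}
the factor $2$ coming from fusing $\trace(C^\top\tilde K^\top B^\top\tilde PL)$ with $\trace(\tilde PB\tilde KCL)$ using $\tilde P=\tilde P^\top$, $L=L^\top$. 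On the other hand, expanding $\tfrac12\langle H_J(K,\tilde K),\tilde K\rangle=\tfrac12\,\trace\big(H_J(K,\tilde K)^\top\tilde K\big)$ from \eqref{H_J} produces the same first two pieces plus the extra term $\trace\!\big((R_2KC-R_{12}^\top-B^\top P)^\top\tilde KC\tilde L\big)$ coming from the $\tilde L$ block. So the crux is the cross-identity
\begin{equation*}
\trace\!\big((R_2KC-R_{12}^\top-B^\top P)^\top\tilde KC\tilde L\big)=-\,\trace\!\big(\tilde PB\tilde KCL\big),
\end{equation*}
which I would obtain by pairing the two perturbation equations against one another: compute $\trace\!\big(\tilde P(A_K\tilde L+\tilde LA_K^\top)\big)$ two ways — once by inserting the right-hand side of \eqref{SON-L}, and once by transferring $A_K$ onto $\tilde P$ and inserting the right-hand side of \eqref{SON-P} — and in each case symmetry of $\tilde P$ and $\tilde L$ collapses the expression to twice a single trace; equating the two gives the identity. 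Combining the pieces yields $\trace(P^{(2)}D_0)=\tfrac12\langle H_J(K,\tilde K),\tilde K\rangle$, which is the claim.

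The work is essentially bookkeeping: there are many near-duplicate terms differing only by a transpose, and the whole argument runs on the systematic use of $\trace(XY)=\trace(YX)$ and $\trace(X)=\trace(X^\top)$ together with the symmetry of $P,L,\tilde P,\tilde L$ to merge each term with its transpose into a factor of $2$; the cross-identity between the $\tilde P$- and $\tilde L$-equations is the one step that is not purely mechanical, since it is what makes the seemingly asymmetric formula \eqref{H_J} produce a symmetric quadratic form. The only analytic point, which I would state and dispose of in a line, is that the expansion is legitimate: $K\mapsto A-BKC$ is affine, $A-BKC$ stays Hurwitz for $\tilde K$ near $0$, and the solution of a Lyapunov equation with a Hurwitz generator is a real-analytic function of the data, so $P(K+\tilde K)$ is real-analytic in $\tilde K$ near $0$ and the discarded remainder is genuinely $O(\|\tilde K\|^3)$. (Incidentally the matrix written $R$ in \eqref{H_J} should read $R_2$, as the derivation makes clear.)
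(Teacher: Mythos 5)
Your proof is correct and is exactly the ``standard technique'' the paper invokes without writing out: the paper states this proposition with only a citation to Levine--Athans and Rautert--Sachs, and the argument there is precisely your second-order perturbation of the two Lyapunov equations followed by the Gramian-pairing step that turns the asymmetric expression \eqref{H_J} into a symmetric quadratic form. Your catch that $R$ in \eqref{H_J} should read $R_2$ is right, and the one non-mechanical step, the cross-identity $\trace\big((R_2KC-R_{12}^\top-B^\top P)^\top\tilde KC\tilde L\big)=-\trace\big(\tilde PB\tilde KCL\big)$, indeed follows by evaluating $\trace\big(\tilde P\,[(A-BKC)\tilde L+\tilde L(A-BKC)^\top]\big)$ in the two ways you describe.
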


Note that the notation $\<\cdot,\cdot\>$ used above is the inner product operator. To solve the constrained optimal control problem numerically, we can start with an initial stabilizing $K^0\in S$ and generate a descent stabilizing sequence \{$K^i$\} 
using the update $K^{i+1}=K^i+s^i\tilde{K}^i$, where $\tilde{K}^i \in \mathcal{S}$ is a descent direction determined by the first-order and possibly the second-order information, and $s^i$ is the step size.

\vspace{-2mm}

\subsection{Projection-based method}
Since the structural constraint $ K \odot I_\mathcal{S}^c=0$ is linear, we can project the gradient $\nabla J(K)$ and $H_J(K,\tilde{K})$ onto the linear subspace $\mathcal{S}$ to guarantee the satisfaction of the structural constraints. The projected gradient of $J$ can be expressed as 
\begin{equation} \label{pro_G_J}
    \nabla J(K) \odot I_\mathcal{S}=2\big((R_2KC-R_{12}^\top-B^\top P)LC^\top \big)\odot I_\mathcal{S}.
\end{equation}
Then, given $L$ and $P$,  the first-order optimality condition $ \nabla J(K) \odot I_\mathcal{S}=0$ is a linear equation involving an entry-wise product. 
Based on the first-order condition \eqref{pro_G_J}, the alternating method (the so-called Anderson-Moore or A-M method)~\cite{Anderson_1971} can be employed. Starting with a decentralized stabilizing controller $K\in \mathcal{S}$, this method alternates between solving the two Lyapunov equations \eqref{FON-L} and \eqref{FON-P}, and solving the linear equation \eqref{pro_G_J}. It is shown in~\cite{Rautert_1997} that the difference between two consecutive steps $K^{i+1}-K^i$ is a descent direction and therefore the alternating method will converge to a stationary point of \eqref{formulation2}. The advantage of this algorithm lies in its fast convergence compared to the gradient method~\cite{makilaComputationalMethodsParametric1987},~\cite{Rautert_1997}.

We next consider the second-order information. With the structural constraint $K\in \mathcal{S}$, the second-order approximation of $J$ can be expressed as 
\begin{equation} \label{pro_H_J}
J(K) +\<\nabla J(K)\odot I_\mathcal{S}, \tilde{K}\>+\frac{1}{2}\<H_J(K,\tilde{K})\odot I_\mathcal{S},\tilde{K}\>, 
\end{equation}
where $\nabla J(K)$ and $H_J(K,\tilde{K})$ are defined in \eqref{G_J} and \eqref{H_J}.
Based on the second-order information and its corresponding necessary optimality condition, Newton's method can be applied to determine the descent direction by minimizing the second-order approximation \eqref{pro_H_J} of the objective function with the structural constraints. To avoid inverting the large Hessian matrix explicitly, the conjugate gradient method can be employed to compute the Newton direction~\cite[Chapter~5]{NoceWrig06}.

Both descent directions described above can be combined with line search methods. The commonly applied backtracking with Armijo rule 
 selects $s^i$ as the largest number in $\{\bar{s}, \bar{s}\beta, \bar{s}\beta^2, ...\}$ such that $K^i+s^i\tilde{K}^i$ is stabilizing and 
\begin{equation}
J(K^i+s^i\tilde{K}^i)<J(K^i)+\alpha s^i \<\nabla J(K^i), \tilde{K}^i\> , 
\end{equation}
where $\alpha, \beta \in (0,1)$ and $\bar{s}$ is the initial step step size. Selecting a large value for $\bar s$ corresponds to aggressive local search. 



\vspace{-2mm}

\subsection{Augmented Lagrangian method}
Instead of forcing the sparsity constraint by projecting $\nabla J(K)$ and $H_J(K,\tilde{K})$ onto the subspace $\mathcal{S}$, the augmented Lagrangian method~\cite{Lin_TAC_2011} minimizes a sequence of unstructured problems. The augmented Lagrangian function for \eqref{formulation2} is given by
\begin{equation} \label{Aug_lag}
    \mathcal{L}_c(K,V)=J(K)+\<V,K \odot I_\mathcal{S}^c\>+\frac{c}{2}\| K \odot I_\mathcal{S}^c\|^2, 
\end{equation}
where the penalty weight $c$ is a positive scalar, $\| \cdot\|$ is the Frobenius norm, and the Lagrangian multiplier $V\in \mathcal{S}^c$ together with a local minimum of \eqref{formulation2}  is assumed to satisfy the second-order sufficient optimality conditions. 
The augmented Lagrangian method starts from an initial estimate of the Lagrangian multiplier $V^0$, and then alternates between minimizing $\mathcal{L}_c(K,V^i)$ with respect to the unstructured $K$ for fixed $V^i$:
\begin{equation*}
K^{i}= \argmin \mathcal{L}_c(K,V^i), 
\end{equation*}
and updating the Lagrangian multiplier:
\begin{equation*}
V^{i+1}=V^i+c(K^i \odot I_\mathcal{S}^c ).
\end{equation*}
To ensure convergence and avoid the ill-conditioning in minimizing $\mathcal{L}_c(K,V)$, a practical scheme is to update the penalty weight as $
c^{i+1}=\gamma c^i $ with $\gamma >1$
until it reaches a certain threshold value $\tau$.
The augmented Lagrangian method terminates as soon as $\| K \odot I_\mathcal{S}^c\|<\epsilon$ is reached.

Similar with the projection-based method, we can use the alternating method or Newton's method combined with the Armijo rule to solve the unconstrained augmented Lagrangian function  $\mathcal{L}_c(K,V^i)$. 
The gradient of $\mathcal{L}_c(K,V^i)$ can be expressed as
\begin{equation} \label{pro_G_lag}
    \nabla \mathcal{L}_c(K,V^i) = 2(R_2KC-R_{12}^\top-B^\top P)LC^\top  +V^i+c(K \odot I_{\mathcal{S}}^c)
\end{equation}
Then, given $L$ and $P$,  the first-order optimality condition $ \mathcal{L}_c(K,V^i)=0$ is a linear equation involving an entry-wise product. 
Based on the first-order condition \eqref{pro_G_J}, the alternating method solves the two Lyapunov equations \eqref{FON-L} and \eqref{FON-P}, and then solves the linear equation \eqref{pro_G_lag}. It is proven in~\cite{Lin_TAC_2011} that the difference between two consecutive steps $K^{i+1}-K^i$ is also a descent direction for the unconstrained augmented Lagrangian function, 
thereby ensuring the convergence
to a stationary point of $\mathcal{L}_c(K)$. 
Newton's method 
can also be applied to minimize $\mathcal{L}_c(K)$ since it is well-suited for ill-conditioned $\mathcal{L}_c(K)$ when the penalty weight $c$ becomes large~\cite[Section~5.2]{bertsekas_2016nonlinear}. To do so, we only need to minimize the second-order approximation of $\mathcal{L}_c(K)$:
\begin{equation} \label{pro_H_Lag}
\mathcal{L}_c(K) +\<\nabla \mathcal{L}_c(K), \tilde{K}\>+\frac{1}{2}\<H_{\mathcal{L}}(K,\tilde{K}),\tilde{K}\>, 
\end{equation}
where $H_{\mathcal{L}}(K,\tilde{K})$ is 
\begin{equation*} \label{H_Lag}
2\Big((R\tilde{K}C-B^\top \tilde{P})LC^\top +(R_2KC-R_{12}^\top-B^\top P)\tilde{L}C^\top \Big) +c\tilde{K}, 
\end{equation*}
and then use the conjugate gradient method to compute the Newton direction.

\vspace{-2mm}

\section{Case Studies} \label{sec:simulation}
In this section, we test the methods of Section \ref{sec:local_search} on examples in~\cite{Han_2019}, where the feasible set of the constrained optimal control problem has an exponential number of connected components and consequently an exponential number of local minima. 
Consider the LTI system in \eqref{LTI} such that $A$ is of the form
\begin{align}\label{eq:aeg}
A = \begin{bmatrix}
    f_1 + \epsilon & f_2 & 0 & \cdots & \cdots & 0\\
    -h_2  &  \epsilon  & f_3 & \ddots  && \vdots \\
      0    & -h_3  &  \epsilon & f_4  & \ddots & \vdots\\ 
       \vdots    &\ddots & \ddots  & \ddots  & \ddots  & 0 \\ 
       \vdots     &&\ddots& -h_{n-1} & \epsilon & f_n \\ 
        0    &\cdots&\cdots & 0 & -h_n & \epsilon
\end{bmatrix},
\end{align}
where $\epsilon>0$, $f_1<0$, and $(-1)^i(f_i-h_{i+1})>0$ for $i=2,\ldots,n$.  
Let $B\in\mathbb{R}^{n\times n}$, $C\in\mathbb{R}^{n \times n}$, $D_0\in\mathbb{R}^{n \times n}$ and $I_\mathcal{S}\in\mathbb{R}^{ n\times n}$ be of the form
\begin{align} \label{eq:beg}
B &= \left[\begin{array} {cccc}
     0 & 1 \\
     -1 &  \ddots& \ddots \\ 
     &\ddots & 0 & 1 \\
      &  & -1 & 0\\ 
\end{array}\right], \ C = I,\ D_0= I, \ I_\mathcal{S}=I
\end{align}
It is proven in~\cite{Han_2019} that for a small enough $\epsilon\geq0$, the set
\begin{equation*}
  \mathcal{K}=  \{K: A-BKC \text{ is stable/Hurwitz, } K \in \mathcal{S} \} 
\end{equation*}
has at least $F_n$ connected components, where $F_0=1, F_1=1, F_{i+2}=F_{i+1}+F_i$ for $i=0, 1, \dots$ is the Fibonacci sequence. Note that $F_n$ grows exponentially in $n$.

\vspace{-2mm}

\subsection{Performance of projection-based method}
Although the observations to be discussed next are also valid for large values of $n$, we restrict the simulations to $n=3$ so that the results can be visualized. Consider the third-order system ($n=3$) with $f_1=-1, f_2=h_2=10, f_3=h_3=1$ and
\begin{equation} \label{n3_no_epislon}
\begin{aligned}
K_c =20I,  \quad
R_2=  
\left[\begin{array} {cccc}
     20 & 1 & -1 \\
     1 & 5 & 2\\ 
     -1 & 2 & 2 \\
\end{array}\right], \quad
\epsilon=0
\end{aligned}
\end{equation}
where $K_c$ is the optimal centralized controller, and $R_1$ and $R_{12}$ are accordingly computed by \eqref{inverse_weights}. Assume that the set $\mathcal{S}$ consists of only purely diagonal matrices, meaning that a decentralized controller is to be designed. The feasible set of the ODC problem has $3$ connected components with no margin between the closures of the components (as shown in Fig. \ref{fig:step_noe}). The parameters of the Armijo rule are set as $\bar{s}=1, \beta=0.5, \alpha=10^{-2}$ and the stopping criterion is $\|\nabla J(K) \odot I_\mathcal{S}\|<10^{-3}$. The initial points are randomly sampled among the structured stabilizing controllers. 

\subsubsection{Jumping bewteen connected components} 
Some of the convergence results from random initializations are summarized in Table \ref{table: results_proj} and some of the trajectories are plotted in Fig. \ref{fig:step_noe}. We use $D_j(x)$ to denote the diagonal matrix where the vectorized diagonal elements are the vector $x$ and the subscript $j$ is the index of the connected component that the corresponding feedback gain belongs to. For example, $D_1(40,40,40)$ represents the diagonal feedback gain with the diagonal entries $40,40,40$ in the connected component $1$. We also use the notation $K^+$ to denote any locally optimal solution and $K_j$ to denote any locally optimal solution in the component $j$. In this example, we have $K_1=K_c$, $K_2=D_2(6.06,-3.16,-0.63)$ and $K_3=D_3(6.48,6.46,3.02)$. Note that $K_c$ is by design the best centralized controller and since it is already diagonal, it is the globally optimal solution of ODC.

From Table~\ref{table: results_proj}, we can see that a jump can occur from the globally optimal component to the sub-optimal component and vice versa. Therefore, on the one hand, the projection-based method can not guarantee the convergence to the globally optimal solution even if initialized in the the globally optimal component. On the other hand, even if initialized in the  sub-optimal component, the projection-based local search is still likely to find the globally optimal solution by jumping to the globally optimal component. This observation also supports the conclusion in Section \ref{sec:connectpath} that except for a set of measure zero, all initial points of the decentralized LQR problem can be connected to the globally optimal decentralized controller via a path that involves only descent directions. 

\begin{table}
\caption{Jump behavior for the projection-based method with $\epsilon=0$}
\centering 
\vspace*{-0.2cm}
\begin{tabular}{ccccc}
\hline
\multirow{2}{*}{$K^0$} & \multicolumn{2}{c}{A-M} & \multicolumn{2}{c}{Newton} \\ \cline{2-5} 
& $K^+$ & $J(K^+)$ & $K^+$ & $J(K^+)$ \\ \hline
$D_1(40,40,40)$ & $K_c$ & 0  & $K_c$  & 0 \\
$D_1(97,80,121)$ & $K_c$  & 0  &  $K_3$ & $7357.5$  \\
$D_1(135,126,171)$ & $K_c$  & 0  &  $K_3$ & $7357.5$ \\
$D_2(0,0,0)$ & $K_2$  &  $16237.0$  & $K_2$ &  $16237.0$ \\ 
$D_2(-54,-26,-41)$ & $K_2$  &  $16237.0$ & $K_c$& $0$\\ 
$D_2(-34,-12,-5)$ & $K_c$  & $0$ & $K_2$ & $16237.0$ \\ 
$D_3(-10,5,10)$ & $K_3$ & $7357.5$ & $K_3$  & $7357.5$ \\ 
$D_3(-19,5,6)$ & $K_c$ &  $0$  & $K_3$  & $7357.5$ \\
$D_3(-25, 6,5 )$ & $K_c$ & $0$  & $K_3$  & $7357.5$ \\ 
\hline
\end{tabular}
\label{table: results_proj}
\end{table}

\subsubsection{Strict separation and exponential number of connected components}
We next consider the same system in \eqref{n3_no_epislon} but with $\epsilon=0.05$. In this case, the connected components will be strictly separated (as shown in Fig. \ref{fig:step_eee}). As $\epsilon$ increases, the disconnected components become more separated~\cite{Han_2019}. Although the jump between the connected components still occurs, the projection-based local search methods is more likely to become stuck in the connected component that contains the initial point since the step size is not adaptively designed. Table \ref{table: compare_epi} compares the number of jumps from the sub-optimality components to the globally optimality component in $10,000$ random initialization trials for different values of $\epsilon$. With the slightly abuse of notation, we use $D_2$ and $D_3$ to denote the component $2$ and $3$ that are the sub-optimal components.

\begin{table}
\caption{Number of jumps from sub-optimality components to globally optimality component with different $\epsilon$.}
\centering 
\vspace*{-0.2cm}
\begin{tabular}{ccccccc}
\hline
\multirow{2}{*}{} & \multicolumn{3}{c}{A-M} & \multicolumn{3}{c}{Newton} \\ \cline{2-7} 
 & $\epsilon=0$ & $\epsilon=0.05$ & $\epsilon=0.1$ & $\epsilon=0$ & $\epsilon=0.05$& $\epsilon=0.1$ \\ \hline
$D_2$ & $105$ & $29$ & $7$ & $1841$ & $586$ &$101$ \\
$D_3$ & $143$ & $135$ & $102$ & $0$ & $0$ & $0$ \\ \hline
\end{tabular}
\label{table: compare_epi}
\end{table}

We comment that in this example, as the dimension $n$ increases, the number of connected components increases exponentially. Therefore, the likelihood of jumping from the sub-optimal component to the globally optimal component is slim with a small step size.

\subsubsection{Aggressive step size}
Table \ref{table: compare_step} compares the number of jumps from the sub-optimal components $D_2$ and $D_3$ to the globally optimal component in $10,000$ random initialization trials for different initial step sizes $\bar{s}$ and $\beta$. The trajectories initialized at the same points but with the different step size parameters 
are plotted in Fig. \ref{fig:step_noe} and Fig. \ref{fig:step_eee}. From the above numerical results, we can see that the projection-based local search method would fail if the step size is restricted to be  small  in the backtracking of the Armijo rule. In  convex optimization as well as those nonlinear problems with a connected feasible region, the common practice is to consider the step size in descent algorithms to be small to guarantee the convergence to a locally/globally optimal solution. Note that the upper bound on the step size is often considered to be less than a constant factor of the inverse of the Lipschitz constant of the objective function \cite{bertsekas_2016nonlinear}. 
But, using a small step size in the above example, we almost always obtain a non-global local solution whenever we initialize the algorithm in a sub-optimal component. However, since numerical algorithms are oblivious to the geometry of the feasible sets, they can be deceived by selecting large step sizes to make them jump from one connected component to another one without realizing discontinuity.

\begin{figure}
	\begin{center}
	\subfigure[Alternating method\label{fig:pro_AM}]
{\includegraphics[width=3.5in,trim=0mm 5mm 0mm 0mm]{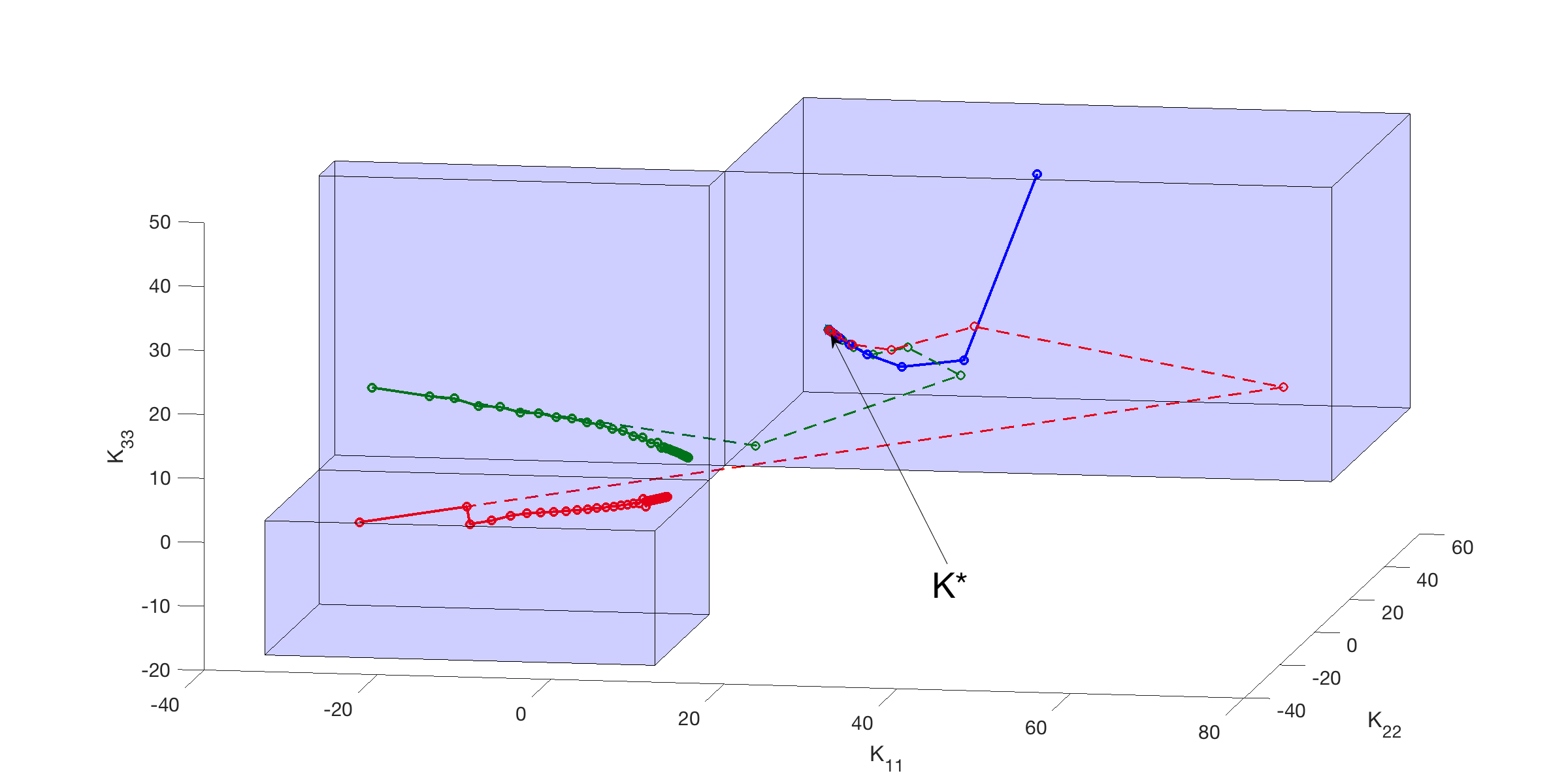}}
	\subfigure[Newton's method \label{fig:pro_AM_cen}]
	{\includegraphics[width=3.5in,trim=0mm 5mm 0mm 3mm]{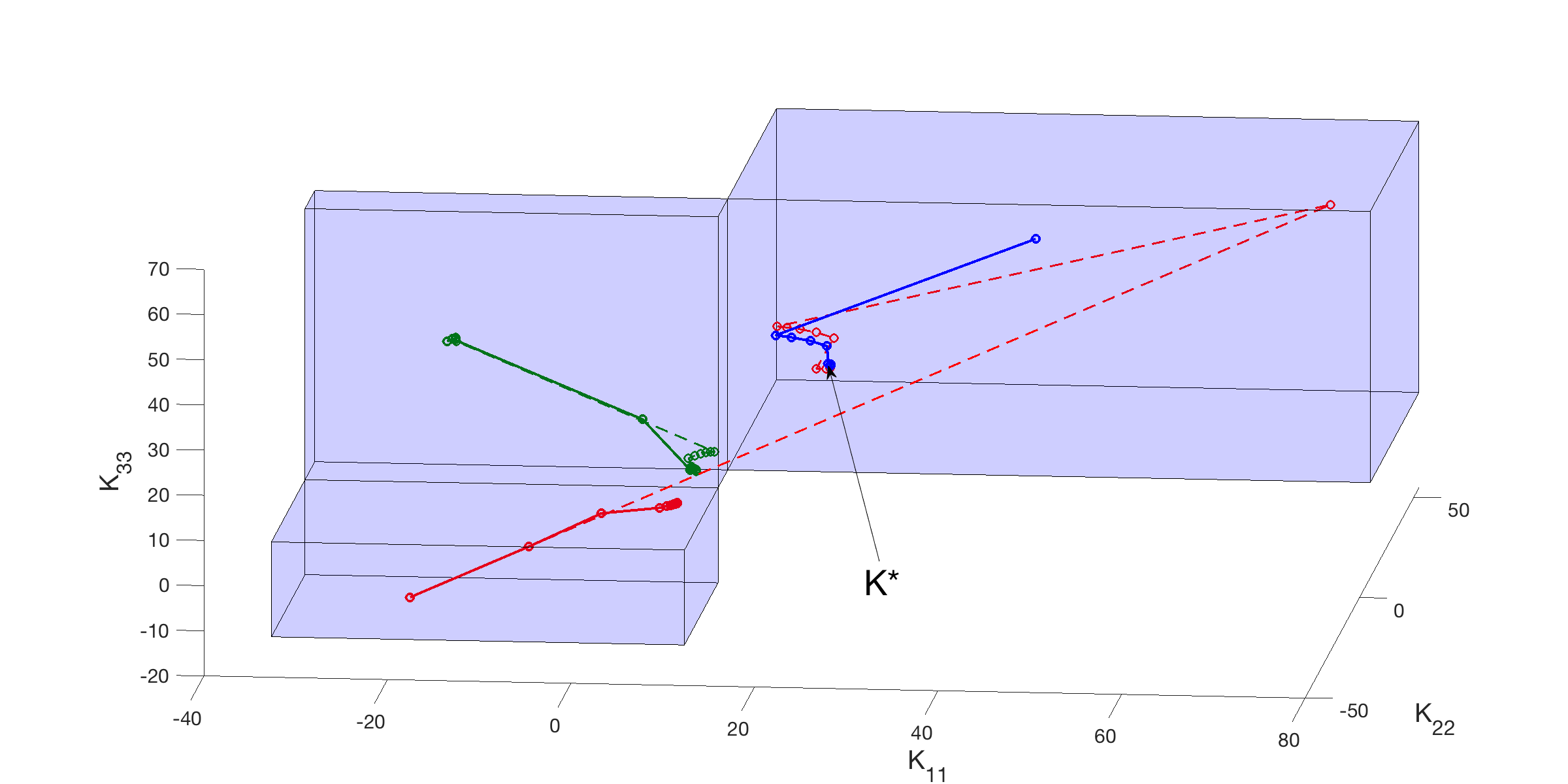}}
		\caption{\footnotesize The trajectory of the sequence $\{K^i\}$ obtained by the projection-based method with $\epsilon=0$. The cubes are the connected components for the decentralized controller. Solid lines correspond to $s^0=1, \beta=0.5$ and the dashed lines correspond to $s^0=5, \beta=0.9$. (a): The blue, red and green line are initialized at the points $D_1(40,40,40)$, $D_2(-28.3,-8.9,-4.4)$ and $D_3(-30.1,7.1,12.6)$, respectively. (b): The blue, red and green line are initialized at the points $D_1(40,40,40)$, $D_2(-21.9,-12.9,-18.3)$ and $D_3(-20,6,30)$, respectively. Note that $K_{11}$, $K_{22}$ and $K_{33}$ denote the diagonal entries of the controller $K$.}
	 \label{fig:step_noe} \normalsize
	\end{center}
	\vspace{-0.2cm}
\end{figure}

\begin{figure}
	\begin{center}
	\subfigure[Alternating method\label{fig:pro_AM}]
{\includegraphics[width=3.5in,trim=0mm 5mm 0mm 0mm]{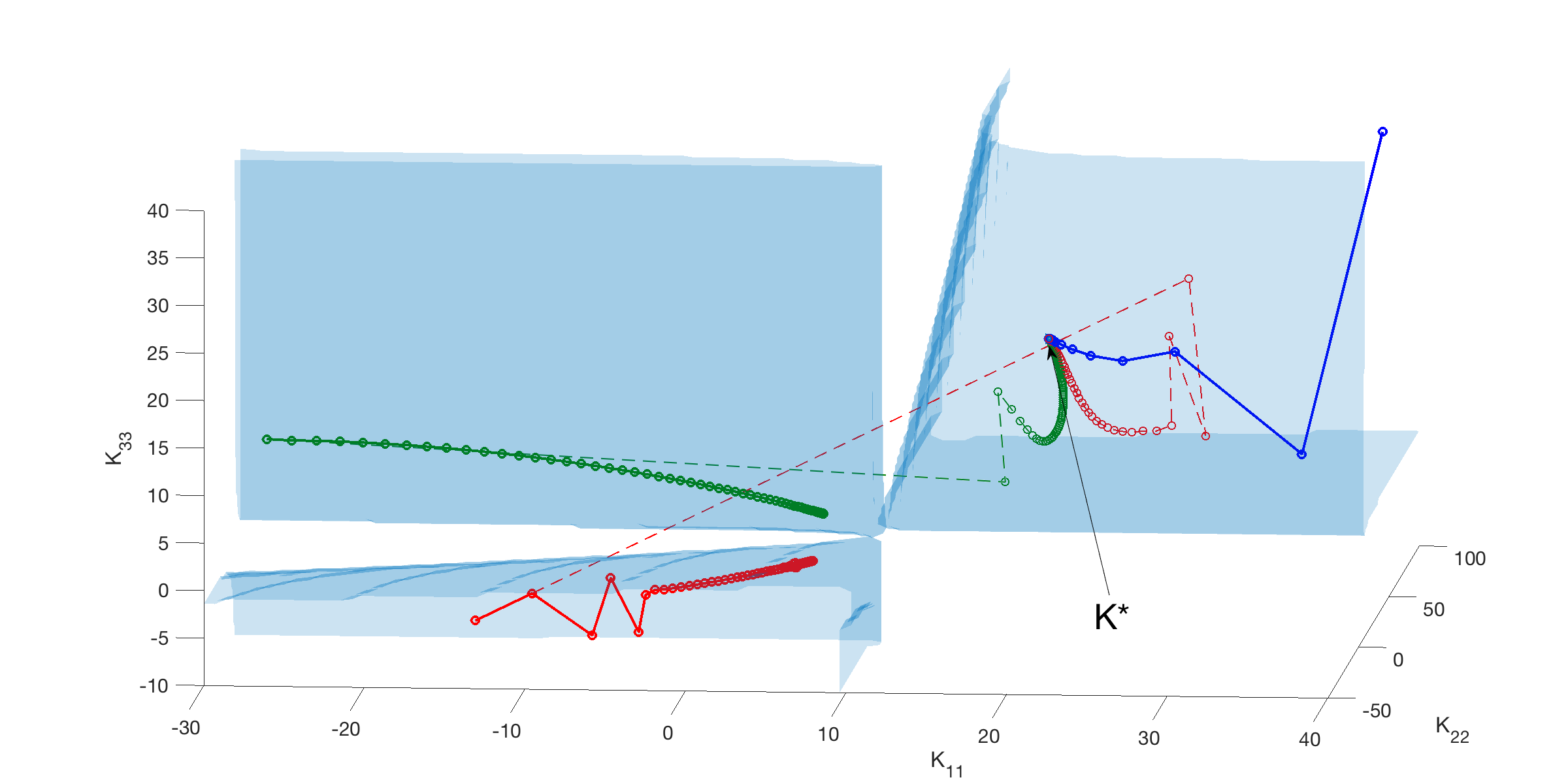}}
	\subfigure[Newton's method \label{fig:pro_AM_cen}]
	{\includegraphics[width=3.5in,trim=0mm 5mm 0mm 3mm]{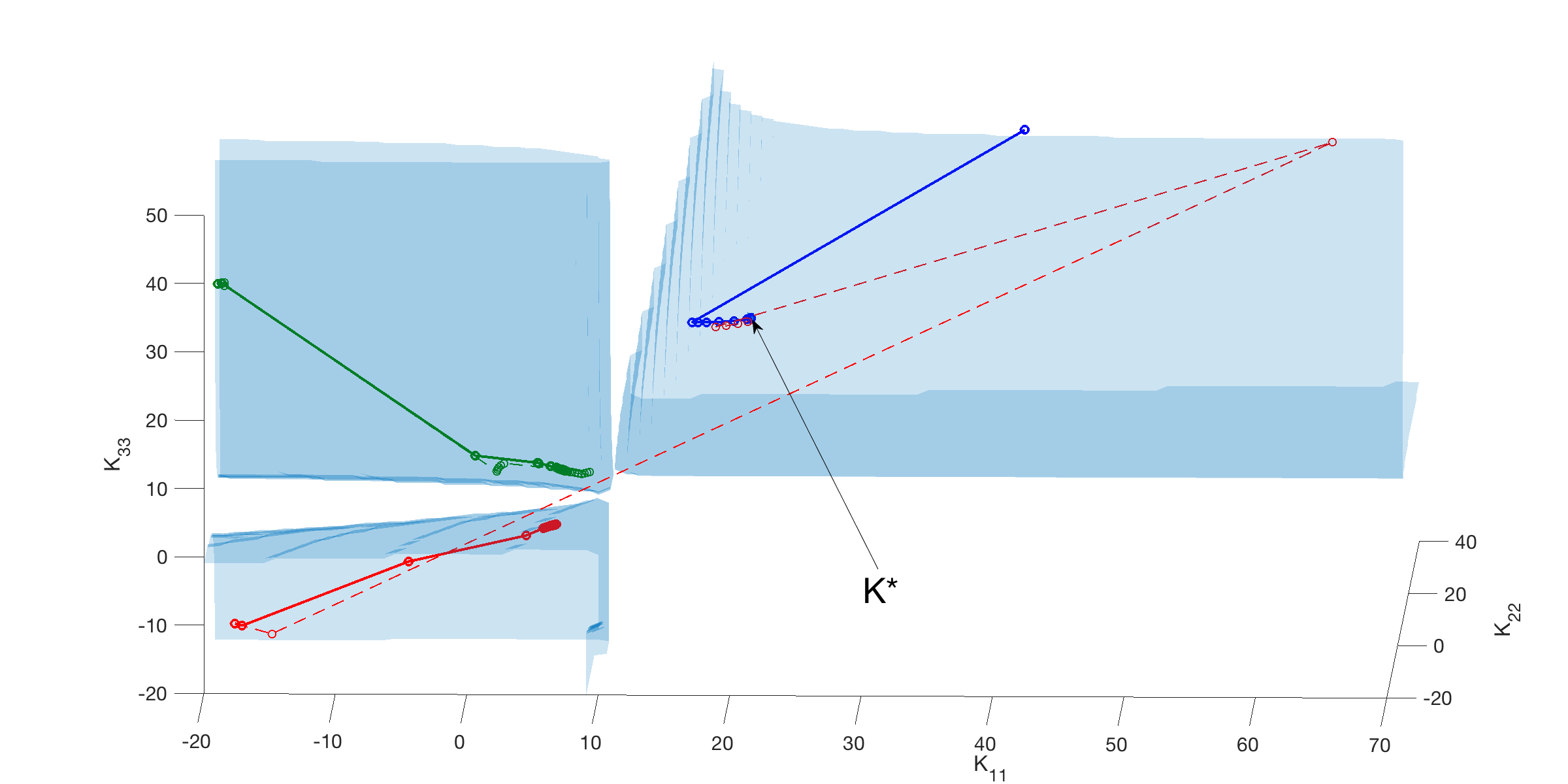}}
		\caption{\footnotesize The trajectory of the sequence $\{K^i\}$ obtained by the projection-based method with $\epsilon=0.05$. The blue regions are the connected components for the decentralized controller. Solid lines correspond to $s^0=1, \beta=0.5$ and dashed lines correspond to $s^0=5, \beta=0.9$. (a): The blue, red and green line are initialized at the points $D_1(40,40,40)$, $D_2(-14.2,-20.7,-6)$ and $D_3(-28.2, 6.1, 10)$, respectively. (b): The blue, red and green line are initialized at the points $D_1(40,40,40)$, $D_2(-18.3,-3.6,-16)$ and $D_3(-20,6,30)$, respectively. }
	 \label{fig:step_eee} \normalsize
	\end{center}\vspace{-0.2cm}
\end{figure}

\begin{table}
\caption{Number of jumps from sub-optimal components to globally optimal component with different $s^0, \beta$. }
\centering 
\vspace*{-0.2cm}
\begin{tabular}{ccccc}
\hline
\multirow{2}{*}{} & \multicolumn{2}{c}{A-M} & \multicolumn{2}{c}{Newton} \\ \cline{2-5} 
 & $s^0=1$ & $s^0=5$ & $s^0=1$ & $s^0=5$ \\ & $\beta=0.5$ & $\beta=0.9$  & $\beta=0.5$ & $\beta=0.9$ \\\hline
 $D_2$ & $29$ & $178$ & $586$ & $6674$ \\
$D_3$ & $136$ & $169$ & $0$ & $0$ \\  \hline
\end{tabular}
\label{table: compare_step}
\end{table}



\vspace{-2mm}

\subsection{Performance of augmented Lagrangian method}

We consider the third-order system given in \eqref{eq:aeg} and \eqref{eq:beg} with $f_1=-1, f_2=h_2=2, f_3=h_3=1$ and
\begin{equation} \label{n3_no_epislon_alm}
\begin{aligned}
K^c=  
\left[\begin{array} {cccc}
     6 & -10 &0 \\
     0 & 2 & -10\\ 
     4 & 0 & 0 \\
\end{array}\right], 
R_2= I,
\epsilon=0.
\end{aligned}
\end{equation}
where $R_1$ and $R_{12}$ are accordingly computed by \eqref{inverse_weights}. 
Here, the parameters of the augmented Lagrangian method are set as $V^0=0, c^0=10, \gamma=3, \tau=10^5$ and the parameters of the Armijo rule are set as $s^0=1, \beta=0.5, \alpha=10^{-2}$. The stopping criterion for the augmented Lagrangian method is $\|K \odot I_\mathcal{S}\|<10^{-4}$ and the stopping criterion for minimizing the unconstrained augmented Lagrangian function is $\|\nabla L_c(K)\|<10^{-2}$.

Since it is generally NP-hard to solve the ODC problem~\cite{Witsenhausen1968},~\cite{Blondel2000}, there is no efficient method to find a globally optimal decentralized controller with guarantees. However, from the fact that a random initialization in our simulation yields at least 2 local solutions with different objective values, we can still conclude that the augmented Lagrangian method with the random initialization fails to find the globally optimal decentralized controller. Some of the convergence results are summarized in Table~\ref{table: results_alm_10}. Here, local optimal solutions are $K_1=D_1(6.31,6.10,3.34)$, $K_2=D_2(0.69,-0.12, -0.34)$ and $K_3=D_3(0.69,-0.12, -0.34)$ and initial points are
\begin{equation*}
K^{01} \hspace{-0.08cm}= \hspace{-0.1cm} \left[\begin{array} {cccc}
172 & -260 & 42\\
130 & 184& -130\\
352 & 0 &-140\\
\end{array}\right]\hspace{-0.05cm}, 
K^{02}\hspace{-0.08cm}=\left[\begin{array} {cccc}
28 & -18.2 & 31 \\
9 & -6 & -9\\ 
18 & 0 & 40 \\
\end{array}\right]. 
\end{equation*}
\vspace{-0.3cm}

\begin{table}[ht]
	\caption{Numerical results for the augmented Lagrangian method with $c^0=10$}
	\centering 
	\vspace*{-0.2cm}
	\begin{tabular}{ccccc}
		\hline
		\multirow{2}{*}{$K^0$} & \multicolumn{2}{c}{Alternating} & \multicolumn{2}{c}{Newton} \\ \cline{2-5} 
		& $K^+$ & $J(K^+)$ & $K^+$ & $J(K^+)$ \\ \hline
		$K_c$ & $K_2$ & $332.5$  & $K_2$  & $332.5$  \\
		$K^{01}$ & $K_1$ & $454.3$ & $K_2$  & $332.5$  \\ 
		$K^{02}$ & $K_2$  &  $332.5$  & $K_1$ & $454.3$\\
		\hline
	\end{tabular}
	\label{table: results_alm_10}
\end{table}
\vspace{-0.3cm}


\subsubsection{Locally strong convexity and aggressive step size}
Table \ref{table: diff_c0} compares the number of convergences to $K_1$, $K_2$ and $K_3$ respectively in $700$ random initialization\footnote{Here, we only randomly sample the diagonal elements of $K\in \mathbb{R}^{n\times n}$ and then solve the feasibility problem such that the eigenvalues of $A-BKC$ are all in the open left half plane} trials for different initial penalty weight $c^0$. 

Penalty methods like the augmented Lagrangian method, which allow the violation of the structure constraints during the iterations, seem more likely to overcome the discontinuity of the feasible region than the projection-based method. However, the locally strong convexity introduced by the augmented Lagrangian function makes the local search more sensitive to the initial point. That is, as the initial penalty weight $c^0$ increases, the locally strong convexity near the subspace $I_\mathcal{S}$ also increases, which tends to attract the initial point to its closet local solution. Therefore, for these problems with a disconnected feasible region, the augmented Lagrangian method is not robust and can easily become stuck in a local solution. To overcome the locally strong convexity associated with the sub-optimal component, an aggressive step size is desirable.

\begin{table}[ht]
	\caption{Numerical results for the augmented Lagrangian method with different $c^0$}
	\centering 
	\vspace*{-0.2cm}
	\begin{tabular}{ccccccc}
		\hline
		\multirow{2}{*}{} & \multicolumn{3}{c}{Alternating} & \multicolumn{3}{c}{Newton} \\ \cline{2-7} 
		& $K_1$ & $K_2$ & $K_3$ & $K_1$ & $K_2$ & $K_3$ \\ \hline
		$C^0=10$ & $2$ & $698$ & $0$ & $18$ & $682$ & $0$  \\
		$C^0=50$ &  $531$ & $169$ & $0$ & $325$ & $375$ & $0$ \\
		$C^0=5000$ & $543$ &  $154$ & $3$ & $328$ & $363$ & $9$  \\ \hline
	\end{tabular}
	\label{table: diff_c0}
\end{table}



\section{Path to the globally optimal solution }\label{sec:connectpath}
In this section, we show that except for a set of measure zero, all initial stabilizing points of the decentralized LQR problem can be connected to the globally optimal decentralized controller via a path that involves only descent directions. The proof requires the result below on convergence to local minimizers. Given a twice continuously differentiable function $J(K)$, its stationary point $K^+$ solves $\nabla J(K^+)=0$. The function $J$ is said to satisfy \emph{strict saddle property}~\cite{ge2015escaping} if each critical point $K^+$ of $J$ is either a local minimizer or a ``strict saddle'', that is, $\nabla^2 J(K^+)$ has at least one strictly negative eigenvalue. 

\begin{lemma}[\cite{leeGradientDescentConverges2016}]\label{lem:convergelocal}
	If $f: \bR^d \to \bR$ is twice continuously differentiable and satisfies the strict saddle property, then gradient descent with a random initialization and sufficiently small constant step sizes converges to a local minimizer or negative infinity almost surely. 
\end{lemma}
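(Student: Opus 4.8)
The plan is to use the dynamical-systems argument behind~\cite{leeGradientDescentConverges2016}: view one gradient step as the iteration of a smooth map and invoke the Center-Stable Manifold Theorem to exclude convergence to strict saddles from all but a Lebesgue-null set of initializations. Write $g(x)=x-\eta\,\nabla f(x)$ for the gradient-descent map with fixed step size $\eta$. First I would record the elementary behavior of the iterates $x_{k+1}=g(x_k)$: since $f\in C^2$, $\nabla f$ is locally Lipschitz, so for $\eta$ small enough (relative to the Lipschitz constant of $\nabla f$ on the sublevel set $\{f\le f(x_0)\}$, via the descent lemma) one has $f(x_{k+1})\le f(x_k)-\tfrac{\eta}{2}\|\nabla f(x_k)\|^2$. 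Hence either $f(x_k)\to-\infty$, or $f(x_k)$ converges and $\|\nabla f(x_k)\|\to 0$, in which case every limit point of $\{x_k\}$ is a stationary point of $f$. Consequently it suffices to prove that the set of initial points whose iterates converge to a \emph{strict saddle} has measure zero: on the complement, whenever the iterates converge, the limit is a stationary point that is not a strict saddle, hence a local minimizer by the strict saddle property.

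Second, I would check that for $\eta$ small $g$ is a $C^1$ diffeomorphism onto its image, because $Dg(x)=I-\eta\,\nabla^2 f(x)$ is then invertible and $g$ is injective on the region of interest; in particular $g$ and each iterate $g^k$ send null sets to null sets and pull null sets back to null sets. Now fix a strict saddle $x^*$. Since $\nabla^2 f(x^*)$ has a strictly negative eigenvalue, $Dg(x^*)=I-\eta\,\nabla^2 f(x^*)$ has an eigenvalue strictly larger than $1$, so $x^*$ is an unstable fixed point of $g$. The Center-Stable Manifold Theorem then yields a neighborhood $B_{x^*}$ and an embedded $C^1$ submanifold $W^{cs}_{\mathrm{loc}}(x^*)\subseteq B_{x^*}$ of dimension at most $d-1$ (the number of eigenvalues of $Dg(x^*)$ of modulus $\le 1$) with the property that any forward orbit of $g$ that stays in $B_{x^*}$ lies in $W^{cs}_{\mathrm{loc}}(x^*)$. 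Being of positive codimension, $W^{cs}_{\mathrm{loc}}(x^*)$ is Lebesgue-null. If $x_k\to x^*$, then $x_k\in B_{x^*}$ for all $k\ge N$ for some $N$, so $x_N\in W^{cs}_{\mathrm{loc}}(x^*)$ and thus $x_0\in\bigcup_{N\ge 0} g^{-N}\!\bigl(W^{cs}_{\mathrm{loc}}(x^*)\bigr)$, a countable union of null sets, hence null.

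Third, I would remove the dependence on the fixed $x^*$. The family $\{B_{x^*}\}$ as $x^*$ ranges over the strict saddles covers the set of strict saddles; since $\bR^d$ is second countable, the Lindel\"of property gives a countable subfamily $\{B_{x^*_i}\}_{i\in\N}$ with the same union. If an orbit converges to some strict saddle, then for all large $k$ it is trapped in an arbitrarily small neighborhood of that limit, hence eventually contained in one of the $B_{x^*_i}$ (and therefore in the corresponding local center-stable manifold). Repeating the argument of the previous paragraph, the set of all initializations whose iterates converge to some strict saddle is contained in $\bigcup_{i\in\N}\bigcup_{N\ge 0} g^{-N}\!\bigl(W^{cs}_{\mathrm{loc}}(x^*_i)\bigr)$, a countable union of Lebesgue-null sets, hence null. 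Together with the reduction in the first paragraph, this proves that for almost every (random) initialization the iterates either send $f$ to $-\infty$ or converge to a local minimizer.

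I expect the main obstacle to be the global bookkeeping rather than any single local step: one must fix a single $\eta$ that makes $g$ simultaneously a descent map and a diffeomorphism on the relevant (noncompact) region --- the lemma's hypotheses are stated loosely as ``sufficiently small'', and the honest version requires a uniform Lipschitz bound for $\nabla f$ along sublevel sets --- and one must be careful in the Lindel\"of step that ``the orbit eventually enters one $B_{x^*_i}$'' really follows from convergence to a strict saddle (it does, since convergence traps the orbit in any preassigned neighborhood of its limit), with extra care when several strict saddles lie close together. The local ingredients (descent lemma, inverse function theorem, Center-Stable Manifold Theorem) are standard; threading them into a clean global statement is where the work lies.
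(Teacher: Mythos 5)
The paper offers no proof of this lemma --- it is imported verbatim from the cited reference \cite{leeGradientDescentConverges2016} --- and your argument is a correct reconstruction of exactly the proof given there: the descent-lemma reduction, the observation that $Dg(x^*)=I-\eta\nabla^2 f(x^*)$ has an expanding eigenvalue at a strict saddle, the Center-Stable Manifold Theorem plus invariance of null sets under the diffeomorphism $g$, and the Lindel\"of covering step to handle all saddles at once. Your closing caveat is also the right one to flag: the ``sufficiently small constant step size'' in the statement tacitly presupposes a uniform Lipschitz bound on $\nabla f$ (assumed globally in the cited work), which $C^2$ alone does not supply; no further changes are needed.
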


To apply the lemma above
, we first 
show that the LQR problem has a certain structure that disallows the locally optimal stabilizing $K$ to have arbitrary magnitude. 

\begin{lemma}\label{lem:lqrbounded}
Consider the decentralized LQR problem in \eqref{formulation1}. Suppose that $C$ has full row rank,  $\left[\begin{smallmatrix}
	R_1 & R_{12} \\ R_{12}^\top & R_2
\end{smallmatrix}\right]$ is positive definite, and $K\in\mathcal{S}$ is stabilizing. Then, 
$
J(K) \to \infty
$
whenever $\|K\|_2 \to \infty$ or when $K$ approaches the boundary of the set of stabilizing controllers. Therefore, any descent method yields a bounded sequence of stabilizing controllers. 
\end{lemma}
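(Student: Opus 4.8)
The plan is to prove coercivity of $J$ on the open (relative to $\mathcal{S}$) feasible set $\mathcal{G}:=\mathcal{M}\cap\mathcal{S}$ in the strong form that every sublevel set $\{K\in\mathcal{G}:J(K)\le\gamma\}$ is \emph{compact}. The two stated behaviours ($J\to\infty$ as $\|K\|_2\to\infty$, and as $K$ approaches $\partial\mathcal{M}$) are precisely boundedness and closedness of these sublevel sets, and the final assertion then follows because a descent method keeps $J(K^i)$ nonincreasing, so all iterates lie in the compact set $\{K\in\mathcal{G}:J(K)\le J(K^0)\}$, hence remain bounded and stabilizing.

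For the growth as $\|K\|_2\to\infty$ I would start from the trace identity $J(K)=\mathrm{trace}(P(K)D_0)=\mathrm{trace}(Q(K)L(K))$, where $L(K)\succ0$ solves \eqref{FON-L}, $P(K)\succ0$ solves \eqref{FON-P}, and $Q(K):=R_1-R_{12}KC-C^\top K^\top R_{12}^\top+C^\top K^\top R_2KC$ (so that \eqref{FON-P} reads $(A-BKC)^\top P+P(A-BKC)=-Q(K)$); this identity is a one-line manipulation using the two Lyapunov equations and the cyclic property of the trace. Writing $M:=\left[\begin{smallmatrix}R_1&R_{12}\\R_{12}^\top&R_2\end{smallmatrix}\right]\succ0$ and $\mu:=\lambda_{\min}(M)>0$, a congruence of $M$ gives $Q(K)\succeq\mu(I+C^\top K^\top KC)$, so $J(K)\ge\mu\,\mathrm{trace}(C^\top K^\top KC\,L(K))$. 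Next I would lower-bound $L(K)=\int_0^\infty e^{(A-BKC)t}D_0e^{(A-BKC)^\top t}\,dt$: since $D_0\succeq\lambda_{\min}(D_0)I$ and, for a unit vector $v$, the scalar $\phi(t):=\|e^{(A-BKC)^\top t}v\|^2$ satisfies $\phi'(t)\ge-2\|A-BKC\|_2\,\phi(t)$ and hence $\phi(t)\ge e^{-2\|A-BKC\|_2 t}$, one obtains $L(K)\succeq\frac{\lambda_{\min}(D_0)}{2\|A-BKC\|_2}I$. Finally, full row rank of $C$ yields $\|KC\|_F^2\ge\sigma_{\min}(C)^2\|K\|_2^2$ (from $\|K\|_2\le\|KC\|_2\,\|C^{\dagger}\|_2$ with a right inverse $C^{\dagger}$), while $\|A-BKC\|_2\le(\|A\|_2+\|B\|_2\|C\|_2)\|K\|_2$ once $\|K\|_2\ge1$. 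Combining the three estimates gives $J(K)\ge c\,\|K\|_2$ for an explicit constant $c>0$ and all large $\|K\|_2$; in particular $J(K)\to\infty$, and every $J$-sublevel set of $\mathcal{G}$ is bounded.

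For the behaviour as $K$ approaches the boundary of the stabilizing set I would argue by contradiction. Suppose $K_n\in\mathcal{G}$ with $\mathrm{dist}(K_n,\mathbb{R}^{m\times p}\setminus\mathcal{M})\to0$ but $\sup_n J(K_n)<\infty$. By the previous paragraph $\{K_n\}$ is bounded, so along a subsequence $K_n\to K_*$ with $K_*$ not stabilizing, i.e. the spectral abscissa obeys $\alpha(A-BK_*C)\ge0$. From $J(K_n)=\mathrm{trace}(P(K_n)D_0)\ge\lambda_{\min}(D_0)\|P(K_n)\|_2$, boundedness of $J(K_n)$ forces $\|P(K_n)\|_2$ bounded, so a further subsequence has $P(K_n)\to P_*\succeq0$; passing to the limit in \eqref{FON-P}, and using $Q(K_n)\to Q_*\succeq\mu I\succ0$, gives $(A-BK_*C)^\top P_*+P_*(A-BK_*C)=-Q_*$. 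For any eigenvalue $\lambda$ of $A-BK_*C$ with eigenvector $w$, left/right-multiplying this identity by $w^*$ and $w$ (and using that $A$ is real, so $\overline{(A-BK_*C)w}=\bar\lambda\bar w$) yields $2\,\mathrm{Re}(\lambda)\,w^*P_*w=-w^*Q_*w<0$, whence $w^*P_*w>0$ and $\mathrm{Re}(\lambda)<0$; thus $A-BK_*C$ is Hurwitz, contradicting $\alpha(A-BK_*C)\ge0$. Hence $J(K_n)\to\infty$, and every $J$-sublevel set of $\mathcal{G}$ is closed in $\mathbb{R}^{m\times p}$ (a boundary limit point is excluded, and $J$ is continuous on the open set $\mathcal{G}$), so these sublevel sets are compact and the descent-sequence claim follows as in the first paragraph.

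The main obstacle is the boundary case: one must rule out $P(K_n)$ staying bounded while $A-BK_nC$ degenerates toward the imaginary axis, which is exactly where positive definiteness of $M$ is needed — to keep $Q(K_n)$ uniformly bounded away from $0$ so that the limiting Lyapunov equation has $-Q_*\prec0$ — together with the compactness/continuity bookkeeping for the eigenvalues of $A-BK_nC$. By comparison, the $\|K\|_2\to\infty$ estimate is routine once the identity $J=\mathrm{trace}(QL)$ and the differential inequality for $\|e^{(A-BKC)^\top t}v\|^2$ are in hand.
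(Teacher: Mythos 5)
Your proof is correct, and while it follows the same two-regime skeleton as the paper ($\|K\|_2\to\infty$ versus approach to $\partial\mathcal{M}$, both powered by the uniform positive definiteness $Q(K)\succeq\lambda_{\min}(M)\,I$ coming from the block weight matrix), the mechanics of both halves are genuinely different. For the unbounded-$K$ regime the paper never leaves the Lyapunov equation for $P$: it bounds $\|C^\top K^\top R_2KC\|_2$ above by $2\|A-BKC\|_2\|P\|_2$ plus lower-order terms and solves for $\|P\|_2\gtrsim\|K\|_2$, whereas you dualize to $J=\mathrm{trace}(QL)$ and lower-bound the controllability Gramian by $L\succeq\frac{\lambda_{\min}(D_0)}{2\|A-BKC\|_2}I$ via the Gr\"onwall estimate on $\|e^{(A-BKC)^\top t}v\|^2$; the two routes cost about the same and both yield an explicit linear coercivity rate. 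For the boundary regime the difference is more substantial: the paper argues quantitatively from the integral representation of $P$, bounding $\mathrm{trace}(P(K))\ge\tfrac12\lambda_{\min}(\hat R)\int_0^\infty e^{2t\,\mathrm{spabs}(A-BKC)}\,dt$, which diverges as the spectral abscissa tends to $0^-$ (incidentally, the paper's display treats $\|e^{tM}\|_2^2=e^{2t\,\mathrm{spabs}(M)}$ as an equality where only ``$\ge$'' holds, a wrinkle your argument avoids entirely). You instead run a soft compactness argument: if $J(K_n)$ stayed bounded while $K_n$ approached the boundary, the limits $P_*\succeq0$ and $Q_*\succ0$ would satisfy a Lyapunov equation forcing $A-BK_*C$ to be Hurwitz, a contradiction. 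Your version buys a cleaner global statement --- compactness of every sublevel set of $J$ on $\mathcal{M}\cap\mathcal{S}$ --- which makes the final claim about descent sequences immediate, at the price of being non-quantitative about the rate of blow-up near the boundary.
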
 

\begin{proof}
 We have 
 \begin{align*}
 P(K) = \int_0^\infty e^{t(A-BKC)^\top}\hat R(K) e^{t(A-BKC)} \text{d}t, 
 \end{align*}
 where 
 \[ \hat R (K) = R_1 - R_{12}KC - C^\top K^\top R_{12} + C^\top K^\top R_2 KC. \]
When $K$ is stabilizing, $P(K)$ is well-defined. As $K$ approaches a finite $K_\dagger$ on the boundary of the set of stabilizing controllers, we show that $\|P(K)\|_2 \to \infty$. By assumption, the symmetric matrix $\hat R(K)$ in the integral is positive definite, because it can be written as
\begin{align*}
\hat R(K_\dagger)= \begin{bmatrix}
	I  & - C^\top K_\dagger^\top
\end{bmatrix} \begin{bmatrix}
	R_1 & R_{12} \\ R_{12}^\top & R_2
\end{bmatrix} \begin{bmatrix}
	I \\ - K_\dagger C^\top 
\end{bmatrix}. 
\end{align*}
Therefore, its minimum eigenvalue $\lambda_{\min}(\hat R(K_\dagger)) > 0$, and when $K$ is close to $K_\dagger$, $\hat R(K) \succeq \frac12 \lambda_{\min}(\hat R(K_\dagger)) I $. We make the estimate 
\begin{align*}
\text{trace}(P(K)\!) & \hspace{-0.1cm} \geq \hspace{-0.1cm} \frac12 \lambda_{\min}(\hat R(K_\dagger)\!) \!\!\! \int_0^\infty\!\!\! \hspace{-0.22cm}\text{trace}\hspace{-0.08cm}\left(\! e^{t(A-BKC)^\top} \hspace{-0.1cm} \! e^{t(A-BKC)}\!\right) \hspace{-0.05cm}dt  \\ 
& \geq \frac12 \lambda_{\min}(\hat R(K_\dagger)) \int_0^\infty \|e^{t(A-BKC)} \|_2^2 dt  \\ 
& = \frac12 \lambda_{\min}(\hat R(K_\dagger)) \int_0^\infty e^{2t \cdot \text{spabs}(A-BKC)} dt, 
\end{align*}
where $\text{spabs}(\cdot)$ denotes the spectral abscissa (maximum real part of the eigenvalues). The estimate above shows that $\text{trace}(P(K))\to \infty$ as $K$ approaches $K_\dagger$ from the stabilizing set, hence $J(K) = \text{trace}(P(K)D_0) \geq \text{trace}(P(K)) \lambda_{\min}(D_0)$ also approaches infinity. 

In case $\|K\|_2 \to \infty$ from the stabilizing set, we use the fact that $P(K)$ is the unique solution to the equation 
\begin{align*}
 (A-BKC)^\top P + P (A-BKC) + \hat R(K) = 0.
\end{align*}
It follows from the triangle inequality that 
\begin{align*}
 & \lambda_{\min}(R_2) \sigma_{\min}^2(C)\|K\|_2^2  
\leq \|C^\top K^\top R_2 KC\|_2 \\ 
&  \leq 2  \|A-BKC\|_2\| P \|_2 +  \|R_1\|_2 + 2 \|R_{12}\|_2\| K\|_2 \|C\|_2  \\ 
& \leq 2 (\|A\|_2 + \|B\|_2\|K\|_2\|C\|_2)\| P \|_2 + \\ 
& \qquad \qquad \|R_1\|_2 + 2 \|R_{12}\|_2\| K\|_2 \|C\|_2. 
\end{align*}
where $\sigma_{\min}(C)$ is the minimum singular value of $C$. Therefore, 
\begin{align*}
& \| P \|_2 \geq  \\ 
& \frac{\lambda_{\min}(R_2)\sigma_{\min}^2(C) \|K\|_2^2 - \|R_1\|_2 - 2 \|R_{12}\|_2\| K\|_2 \|C\|_2}{2 (\|A\|_2 + \|B\|_2\|K\|_2\|C\|_2) }. 
\end{align*}
Hence, $\|P(K)\|_2\to \infty$ as $\|K\|_2 \to \infty$ inside the stabilizing set. Similarly $J(K) = \text{trace}(P(K)D_0) \geq \|P(K)\|_2 \lambda_{\min}(D)$ also approaches infinity. 
\end{proof}

Lemma~\ref{lem:lqrbounded} guarantees existence of a locally optimal decentralized controller in any connected component of the stabilizing set. Next, we show that around any strict local minimum of $J(K)$, there exists a controller from which a descent direction point towards a neighborhood of the globally optimal controller. 

\begin{lemma} \label{lem:find-global-direction}
	Suppose that $K^+$ is a strict local minimum of $J(K)$, and $K^+$ is not equal to a globally optimal solution $K^*$. Then, for all $\delta_0>0$, there exist a stabilizing $\hat K^+$ with $\|\hat K^+ - K^+\| \leq \delta_0$ and a number $\delta_1>0$, such that for all stabilizing $\hat K^*$ with $\|\hat K^* - K^*\| \leq \delta_1$, the direction $\hat K^* - \hat K^+$ is a descent direction at $\hat K_0$. 
\end{lemma}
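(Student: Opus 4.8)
The plan is to reduce the statement to a one--variable calculus fact along the line through $K^+$ and $K^*$, and then close with a continuity/openness argument to move from $K^*$ to nearby stabilizing points. Write $v:=K^*-K^+$; this is nonzero because $K^+\neq K^*$, and it lies in the subspace $\mathcal S$, so every point $K^+ + tv$ is decentralized. Since the set of stabilizing controllers is open and contains $K^+$, and $J$ is twice continuously differentiable there, the scalar function $\phi(t):=J(K^+ + tv)$ is well defined and $C^1$ on some interval $(-\eta,\eta)$, with the whole segment $\{K^+ + tv : |t|\le\eta\}$ staying in the stabilizing set.

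The heart of the argument is to place $\hat K^+$ slightly on the \emph{far} side of $K^+$, that is, take $\hat K^+ = K^+ + t_2 v$ with $t_2<0$, so that moving from $\hat K^+$ back toward and past $K^+$ is simultaneously a descent step and a step in the direction of $K^*$. Because $K^+$ is a strict local minimum, after shrinking $\eta$ we have $\phi(t)>\phi(0)$ for all $t$ with $0<|t|\le\eta$ (for such $t$, $K^+ + tv$ lies in the punctured neighborhood on which $J>J(K^+)$). I would then fix $t_1\in(-\eta,0)$ with $|t_1|\,\|v\|\le\delta_0$ and apply the mean value theorem on $[t_1,0]$: since $\phi$ drops from $\phi(t_1)>\phi(0)$ down to $\phi(0)$, there is $t_2\in(t_1,0)$ with $\phi'(t_2)=\frac{\phi(0)-\phi(t_1)}{-t_1}<0$. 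Setting $\hat K^+:=K^+ + t_2 v$ then gives a stabilizing point with $\|\hat K^+-K^+\|=|t_2|\,\|v\|<\delta_0$ and $\langle\nabla J(\hat K^+),v\rangle=\phi'(t_2)<0$. Finally $K^*-\hat K^+=(1-t_2)v$ is a positive multiple of $v$ (as $1-t_2>1$), so $\langle\nabla J(\hat K^+),\,K^*-\hat K^+\rangle=(1-t_2)\phi'(t_2)<0$; that is, the direction from $\hat K^+$ straight to $K^*$ is already a descent direction at $\hat K^+$.

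To pass from $K^*$ to an arbitrary nearby stabilizing $\hat K^*$, I would use continuity of the map $\hat K^*\mapsto\langle\nabla J(\hat K^+),\,\hat K^*-\hat K^+\rangle$: it equals the strictly negative number $(1-t_2)\phi'(t_2)$ at $\hat K^*=K^*$, hence stays negative on a ball of some radius $\delta_1>0$ about $K^*$, which is the claimed $\delta_1$ (and $\hat K_0$ in the statement is this $\hat K^+$). The step I expect to be the main obstacle, or at least the one most easily gotten wrong, is the temptation to argue via $\nabla^2 J(K^+)$: strict local minimality does \emph{not} force $\langle\nabla^2 J(K^+)v,v\rangle>0$, and $\phi$ may fail to be monotone on either side of $0$, so a Taylor/second-order argument does not go through directly. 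Deriving the negative-slope point purely from the sign inequality $\phi(t_1)>\phi(0)$ via the mean value theorem sidesteps this; the remaining bookkeeping --- keeping all points in the open stabilizing set, and in $\mathcal S$ --- is routine once $\eta$, and hence $|t_1|$, is chosen small enough.
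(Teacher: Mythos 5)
Your proof is correct, and it reaches the same geometric configuration as the paper --- you place $\hat K^+$ on the line through $K^+$ and $K^*$, slightly on the far side of $K^+$, so that $K^*-\hat K^+$ is a positive multiple of the direction pointing from $\hat K^+$ back through $K^+$, and then finish with the same continuity argument in $\hat K^*$. The difference is in how the descent inequality at $\hat K^+$ is obtained. The paper asserts that strict local minimality gives $\nabla^2 J(K)\succ 0$ on a neighborhood of $K^+$, hence local strong convexity, hence $\langle -\nabla J(K),K^+-K\rangle=\langle\nabla J(K^+)-\nabla J(K),K^+-K\rangle>0$ for all nearby $K$; it then takes \emph{any} $\hat K^+$ with $K^+-\hat K^+=\beta(K^*-\hat K^+)$, $\beta>0$. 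You instead restrict $J$ to the line, use only the defining inequality $\phi(t_1)>\phi(0)$ of a strict local minimum, and extract a single point $t_2$ with $\phi'(t_2)<0$ via the mean value theorem. Your route is more elementary and needs strictly weaker hypotheses: as you note, a strict local minimum in the standard sense (e.g., $x\mapsto x^4$ at the origin) need not have $\nabla^2 J\succ 0$ at or near $K^+$, so the paper's opening step is really invoking non-degeneracy (second-order sufficiency) rather than strictness; your MVT argument closes that gap. What you give up is uniformity --- the paper's strong-convexity inequality makes \emph{every} sufficiently close $\hat K^+$ on that ray work, whereas you only produce one (which is all the lemma asks for). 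Both proofs share the remaining bookkeeping: $v\in\mathcal S$ keeps the segment decentralized, openness of the stabilizing set keeps it feasible, and $\hat K^*\mapsto\langle\nabla J(\hat K^+),\hat K^*-\hat K^+\rangle$ is continuous and negative at $K^*$.
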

\begin{proof}
	Since $K^+$ is a strict local minimum, there is a number $\delta_0'\in (0, \delta_0)$ such that when $\|K-K^+\| \leq \delta_0'$, we have $\nabla^2 J(K) \succ 0$. This implies that $f$ is strongly convex in this $\delta_0'$-neighborhood of $K^+$, that is,  whenever  $\|K-K^+\| \leq \delta_0'$, it holds that 
	\begin{align}
		\langle - \nabla J(K), K^+\hspace{-0.07cm} -\hspace{-0.07cm} K\rangle &\hspace{-0.07cm}=\hspace{-0.07cm} \langle \nabla J(K^+) - \hspace{-0.07cm} \nabla J(K), K^+\hspace{-0.07cm} - K\rangle \hspace{-0.07cm} > \hspace{-0.07cm} 0. \label{eq:local-strongly}
	\end{align}
    Especially, for $\hat K^+$ with  $\| \hat K^+ - K^+\| \leq \delta_0'$ and $K^+ - \hat K^+= \beta(K^* - \hat K^+)$ for some $\beta>0$, \eqref{eq:local-strongly} implies that $K^* - \hat K^+$ is a descent direction at $\hat K^+$. By continuity, there is a $\delta_1>0$ such that for all $\|\hat K^* - K^*\| \leq \delta_1$, $\hat K^* - \hat K^+$ is also a descent direction.
\end{proof}

\begin{remark}
When $\nabla J(K)$ is smooth, the claim of Lemma~\ref{lem:find-global-direction} can be strengthened. At a non-degenerate zero $K^+$ of the vector field $\nabla J(K)$, the index of the vector field is nonzero. Within the set of stabilizing controllers, a suitably small lower-level set of $J$ around $K^+$can be regarded as a manifold $X$ with boundary; its Gauss map to the unit sphere will have a non-zero degree~\cite[\S6]{milnorTopologyDifferentiableViewpoint1997}. Sard's theorem ~\cite{milnorTopologyDifferentiableViewpoint1997} implies that almost all directions in the unit sphere are achievable by some gradient of $J$ at the boundary of $X$. When $X$ is small, the direction $K^*-K^+$ is not so different from $K^*-K$; hence almost all points in a neighborhood of $K^*$ can be made arbitrarily close to some ray $K - \alpha \nabla J(K), \alpha>0$ with a suitable $K\in X$.  
\end{remark}

\begin{theorem}
	Consider a decentralized LQR problem with the same assumption as in Lemma~\ref{lem:lqrbounded}. Suppose that $J(K)$ satisfies the strict-saddle property and its local minima are all strict. Then, except for a set of measure zero, from every initial stabilizing controller
	, there is a path to a globally optimal decentralized controller that involves only descent directions. 
\end{theorem}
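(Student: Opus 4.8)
The plan is to produce the required path as a concatenation of three pieces: a gradient‑flow segment that drives the arbitrary initial controller $K^0$ to a local minimizer $K^+$, a short descent path inside the strongly convex basin of $K^+$, and one \emph{aggressive} (large‑step) move that jumps onto a globally optimal controller. Throughout, ``path involving only descent directions'' is read as a sequence of local‑search steps $K^{i+1}=K^i+s^i\tilde K^i$ with $\langle\nabla J(K^i),\tilde K^i\rangle<0$, where the step sizes $s^i$ may be large so that a single segment can exit the stabilizing set while both its endpoints stay feasible --- exactly the oblivious component‑jumping the paper illustrates.

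First I would carry out the convergence step. Regard $J$ as a twice continuously differentiable function on the open set $\mathcal M\cap\mathcal S$, identified with an open subset of the subspace $\mathcal S$ (a projected‑gradient iteration keeps the iterates in $\mathcal S$). By Lemma~\ref{lem:lqrbounded}, $J$ is coercive on $\mathcal M\cap\mathcal S$ and bounded below by $0$, so $\{K\in\mathcal M\cap\mathcal S:\ J(K)\le J(K^0)\}$ is a compact subset of $\mathcal M\cap\mathcal S$ invariant under any descent iteration started at $K^0$; on this set we may invoke Lemma~\ref{lem:convergelocal}: since $J$ is $C^2$ and satisfies the assumed strict‑saddle property, for every $K^0$ outside a measure‑zero set $Z$ (the union of the stable manifolds of the strict saddles), projected gradient descent with a random initialization and a small constant step size converges to a local minimizer $K^+$ (divergence to $-\infty$ is impossible since $J\ge0$). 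As all local minima are assumed strict, $K^+$ is a \emph{strict} local minimum, and the gradient‑descent trajectory is a genuine descent path from $K^0$ to $K^+$ lying in a single connected component. If $K^+$ is globally optimal, we are done.

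Otherwise, fix a globally optimal $K^*\in\mathcal M\cap\mathcal S$ (finite and interior by Lemma~\ref{lem:lqrbounded}) and apply Lemma~\ref{lem:find-global-direction}: for any $\delta_0>0$ it gives a stabilizing $\hat K^+$ with $\|\hat K^+-K^+\|\le\delta_0$, lying on the segment between $K^+$ and $K^*$ hence in $\mathcal S$, and a $\delta_1>0$ so that $K^*-\hat K^+$ is a descent direction at $\hat K^+$ (take $\hat K^*=K^*$, admissible since $\|K^*-K^*\|=0\le\delta_1$). It remains to link $K^+$ and $\hat K^+$ by a descent path and then to reach $K^*$. For the first, by the proof of Lemma~\ref{lem:find-global-direction} $J$ is strongly convex on a ball $B=\{\|K-K^+\|\le\delta_0'\}$, so (when $\dim\mathcal S\ge2$) its level sets in $B$ are nested topological spheres about $K^+$; in ``polar'' coordinates $(J(K),(K-K^+)/\|K-K^+\|)$ on $B\setminus\{K^+\}$ one can steer from any $K\in B$ to any $\hat K\in B$ with $J(\hat K)<J(K)$ along a path with $J$ strictly decreasing (rotate the direction while monotonically lowering the level). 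Running gradient descent slightly further we reach $K^M\in B$ arbitrarily close to $K^+$ with $J(K^M)>J(K^+)$, and choosing $\delta_0$ small enough that $J(\hat K^+)<J(K^M)$ yields a descent path inside $B$ from $K^M$ to $\hat K^+$.

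Finally, for the jump, note $K^*=\hat K^+ + 1\cdot(K^*-\hat K^+)$: the single step at $\hat K^+$ with direction $\tilde K=K^*-\hat K^+$ and step size $s=1$ lands exactly on $K^*$. With the initial backtracking step $\bar s$ set to $1$, the first trial $s=1$ is stabilizing and passes the Armijo test $J(K^*)<J(\hat K^+)+\alpha\langle\nabla J(\hat K^+),K^*-\hat K^+\rangle$, since $\langle\nabla J(\hat K^+),K^*-\hat K^+\rangle<0$ while $J(\hat K^+)-J(K^*)$ stays bounded below by $J(K^+)-J(K^*)-o(1)>0$ and $\nabla J(\hat K^+)\to\nabla J(K^+)=0$ as $\delta_0\to0$. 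Concatenating the gradient flow $K^0\to K^+$, the path $K^M\to\hat K^+$ in $B$, and the aggressive step $\hat K^+\to K^*$ gives, for every $K^0\notin Z$, a path to a globally optimal decentralized controller whose search directions are all descent directions. I expect the main obstacle to be the linking step inside $B$: Lemma~\ref{lem:convergelocal} only says the iterates approach $K^+$, whereas Lemma~\ref{lem:find-global-direction} needs the specific nearby $\hat K^+$, so one must justify rigorously --- via the convex‑bowl geometry at a strict minimum --- that $\hat K^+$ is reachable by descent, and pin down exactly which ``aggressive'' step sizes the line‑search oracle may return; a lesser point is transplanting Lemma~\ref{lem:convergelocal}, stated on $\bR^d$, to the open manifold of stabilizing controllers, which the coercivity of Lemma~\ref{lem:lqrbounded} handles.
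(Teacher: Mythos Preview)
Your overall strategy matches the paper's: descend by gradient steps until you enter the strongly convex basin of a strict local minimizer $K^+$, pivot to the special point $\hat K^+$ furnished by Lemma~\ref{lem:find-global-direction}, then take an aggressive step toward the global optimum. The difference is in the pivot, and the paper's handling is both simpler and exactly resolves the obstacle you flag at the end.

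You try to connect an iterate $K^M$ to $\hat K^+$ by a continuous descent path inside the convex bowl, using a ``rotate the direction while lowering the level'' argument on the nested level spheres. This is more work than needed and, as you note parenthetically, requires $\dim\mathcal S\ge 2$; in one dimension, if $K^M$ and $\hat K^+$ lie on opposite sides of $K^+$, no strictly descending path connects them without passing through $K^+$, where no descent direction exists. The paper sidesteps this entirely: at an iterate $K^n$ in the $\epsilon$-ball, strong convexity gives $\langle -\nabla J(K^n),\,K^+-K^n\rangle>0$, and by continuity there is $0<\delta<\epsilon$ such that $K-K^n$ is a descent direction for \emph{every} $K$ with $\|K-K^+\|\le\delta$. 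Applying Lemma~\ref{lem:find-global-direction} with $\delta_0=\delta$ produces $\hat K^+$ in that $\delta$-ball, so $\hat K^+-K^n$ is itself a descent direction and a \emph{single} step $K^{n+1}=\hat K^+$ suffices---no path-building inside the bowl is needed, and the dimension restriction disappears.

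A minor second difference: rather than jumping exactly to $K^*$, the paper lands at a nearby $\hat K^*$ (also supplied by Lemma~\ref{lem:find-global-direction}) chosen so close to $K^*$ that ordinary gradient descent from $\hat K^*$ converges to $K^*$. Your choice $\hat K^*=K^*$ is admissible and your Armijo verification is fine, so this is a stylistic rather than substantive divergence.
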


\begin{proof} 
Denote by $K^*$ a globally optimal decentralized controller. Suppose that the initialization is at a point $K_0$. If $\nabla J(K_0) = 0$, we initialize at a local minimum and can never escape. This scenario occurs on a measure-zero set since by assumption the local minima are isolated. When $\nabla J(K_0) \neq 0$, there are two cases: (1) if $\langle K^* - K, \nabla J(K) \rangle <0$, then $K^*-K$ is a descent direction, and it is possible to jump to the globally optimal solution in one step; (2) If $\langle K^* - K, \nabla J(K) \rangle \geq 0$, the global optimal solution is on the other side of the local gradient. We will prove that it is still possible turn around near a locally optimal controller, which exists in any connected components due to Lemma~\ref{lem:lqrbounded}. 

From Lemma~\ref{lem:convergelocal}, for almost any initial point $K^0$, gradient descent and a small enough step size is able to come arbitrarily close to some local minimum $K^+$. Since $K^+$ is a strict local minimum, there is some $\epsilon>0$ such that when $\|K-K^+\| \leq \epsilon$, we have $\nabla^2 J(K) \succ 0$, which means that $J$ is strongly convex in this $\epsilon$-neighborhood of $K^+$. Suppose that at $n$-th iteration, we are at some point $K^n$ in this $\epsilon$-neighborhood. It followings from strong convexity that
\begin{align*}
\langle \hspace{-0.07cm} - \hspace{-0.07cm}\nabla J(K^n), K^+\hspace{-0.1cm} -\hspace{-0.1cm} K^n\rangle \hspace{-0.07cm}=\hspace{-0.07cm} \langle \nabla J(K^+)\hspace{-0.07cm}- \hspace{-0.1cm}\nabla J(K^n), K^+ \hspace{-0.1cm}- \hspace{-0.07cm}K^n\rangle\hspace{-0.08cm} >\hspace{-0.07cm} 0
\end{align*}
which means that $K^+ - K^n$ is a descent direction at $K^n$. By continuity, there is a small $0<\delta<\epsilon$ such that $K-K^n$ is a descent direction whenever $\|K -K^+\| \leq \delta$, 
Applying Lemma~\ref{lem:find-global-direction} with this $\delta_0=\delta$ to obtain $\hat K^+$ and $\hat K^*$, and $\hat K^*$ may be selected so close to $K^*$ that gradient descent initialized at $\hat K^*$ converges to $K^*$. Assigning $K^{n+1}=\hat K^+$ and $K^{n+2}=\hat K^*$, with only descent directions we can connect $K_0$ to a neighborhood of $K^*$ where gradient descent converges. 
\end{proof}
   
\section{CONCLUSIONS}\label{sec:conclusion}
We studied the numerical behavior of local search methods when they are applied to the optimal decentralized control (ODC) problem with a disconnected feasible set. We found different behaviors between projection-based and augmented Lagrangian methods when there are multiple local minima. Moreover, we proved that a succession of jumps to the globally optimal component with descent directions is possible for almost all initializations. The existence of a path that involves many connected components provides a theoretical way to escape local minima that are created by the discontinuity of the feasible set of constrained optimal control problems. It should be noted that our existence result does not directly imply an algorithm that decides the best opportunity to take a jump to the optimal component. Using prior information about the optimal component to identify the best jump strategy is a promising direction of future research. 






\bibliographystyle{IEEEtran} 



\end{document}